\newtheorem{theorem}{Theorem}[section]
\newtheorem{proposition}[theorem]{Proposition}
\newtheorem{remark}[theorem]{Remark}
\newtheorem{lemma}[theorem]{Lemma}
\newtheorem{definition}[theorem]{Definition}
\DeclareMathOperator*{\limess}{lim\,ess}
\newcommand{\rr}{\mathbb{R}}
\newcommand{\sgn}{\,{\rm sgn}}
\def\eps{\epsilon}
\def\la{\lambda}
\def\l{\left}
\def\r{\right}
\def\umu{u^\mu}
\def\dd{\mathcal{D}}
\def\tumu{\tilde{u}_{\la}^{\mu}}
\begin{document}


\title[ Subcritical convection-diffusion equation with nonlocal diffusion]{On the asymptotic behavior of a subcritical convection-diffusion equation with nonlocal diffusion}

\author[C. M. Cazacu]{Cristian M. Cazacu}
\address[C. M. Cazacu]{Department of Mathematics and Informatics\\ Faculty of Applied Sciences\\
 University Politehnica of Bucharest\\
Splaiul Independentei 313\\ Bucharest\\ 060042\\ Romania
\hfill\break\indent \and
\hfill\break\indent
Institute of Mathematics ``Simion Stoilow'' of the Romanian Academy\\21 Calea Grivitei Street \\010702 Bucharest \\ Romania. Research group of the project PN-II-ID-PCE-2011-3-0075.}
\email{cristi\_cazacu2002@yahoo.com}

\author[L. I. Ignat]{Liviu I. Ignat}
\address[L. I. Ignat]{Institute of Mathematics ``Simion Stoilow'' of the Romanian Academy\\21 Calea Grivitei Street \\010702 Bucharest \\ Romania.}
\email{liviu.ignat@gmail.com}

\author[A. F. Pazoto]{Ademir F. Pazoto}
\address[A. F. Pazoto]{Instituto de Matem\'atica, Universidade Federal do Rio de
Janeiro, P.O. Box 68530, CEP  21941-909, Rio de Janeiro, RJ,  Brasil}
\email{ademir@im.ufrj.br}

\begin{abstract}
In this paper we consider a subcritical model that involves nonlocal diffusion and a classical convective term. In spite of the nonlocal diffusion, we obtain an Oleinik type estimate similar to the case when the diffusion is local. First we prove that the entropy solution can be obtained by adding a small viscous term $\mu u_{xx}$ and letting $\mu\to 0$. Then, by using uniform Oleinik estimates for the viscous approximation we are able to prove the well-posedness of the entropy solutions with $L^1$-initial data.
Using  a scaling argument and  hyperbolic estimates given by Oleinik's inequality, we  obtain the first term in the asymptotic behavior of the nonnegative solutions. Finally, the large time behavior of changing sign solutions is proved using the classical flux-entropy method and estimates for the nonlocal operator.

%

 \end{abstract}

\maketitle

{\textit{Key words}:} asymptotic behavior, nonlocal diffusion, subcritical convective equation, Oleinik-type estimates.

{\textit{Mathematics Subject Classification 2010}:}  35B40, 45M05, 45G10, 35B51.

\section{Introduction}

The aim of this paper is to extend the previous known results on the asymptotic behavior of classical convection-diffusion system \cite{MR1233647}
\begin{equation}\label{gas1}
\left\{
\begin{array}{ll}
u_t+|u|^{q-1}u_x=u_{xx},& x\in \rr,\ t>0,\\[10pt]
u(0)=\varphi
\end{array}
\right.
\end{equation}
to the following system
\begin{equation}\label{Ldifusion}
\left\{
\begin{array}{ll}
u_t+|u|^{q-1}u_x=Lu,& x\in \rr,\ t>0,\\[10pt]
u(0)=\varphi,
\end{array}
\right.
\end{equation}
where $1< q< 2$ and   $L$ is a nonlocal operator of the type
\begin{equation}\label{L}
(Lu)(x)=\int _{\rr}J(x-y)(u(y)-u(x))dy.
\end{equation}
Throughout this paper we assume that  the kernel $J$ satisfies the following general assumptions: $J\in L^1(\rr)$ is a nonnegative even  function with mass one. In view of these, when necessary for our purpose we will write $J\ast u-u$ instead of $Lu$. We recall that systems like \eqref{gas1}-\eqref{Ldifusion} for which the mass is conserved are so-called conservation laws.
System \eqref{Ldifusion} has been already studied for both the critical and supercritical case  $q\geq 2$ in the recent paper \cite{MR3190994} in which the  subcritical case $1<q< 2$ has been stated as an open problem. In the present work we are precisely interested in this case, in which the following nonlocal comparison principle plays a crucial role.

\begin{theorem}\label{main}
Let $L$ be an operator given by \eqref{L}. Then for any $\beta\geq 0$ and any nonnegative function $z\in L^\infty(\rr)$  there exists a nonpositive function
$A_z:\rr\rightarrow \rr$ such that
\begin{equation}\label{est.punctual}
\Big(zL(z^\beta w)-\frac{\beta}{\beta+1}wL(z^{\beta+1})\Big)(x_0)\leq  A_z(x_0) w(x_0)
\end{equation}
holds for any function $w\in C(\rr)\cap L^\infty(\rr)$ that
 attains its maximum at the point $x_0\in \rr$.
\end{theorem}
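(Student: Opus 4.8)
The plan is to expand both nonlocal terms through the definition \eqref{L}, combine them into a single integral, and then estimate the resulting integrand pointwise in the integration variable $y$. Writing the left-hand side of \eqref{est.punctual} as $\int_{\rr}J(x_0-y)\,\Phi(y)\,dy$ with
\[
\Phi(y)=z(x_0)z^\beta(y)w(y)-z^{\beta+1}(x_0)w(x_0)-\frac{\beta}{\beta+1}w(x_0)z^{\beta+1}(y)+\frac{\beta}{\beta+1}w(x_0)z^{\beta+1}(x_0),
\]
one checks directly that $\int_{\rr}J(x_0-y)\Phi(y)\,dy$ equals $\big(zL(z^\beta w)-\tfrac{\beta}{\beta+1}wL(z^{\beta+1})\big)(x_0)$: the first and second terms reconstruct $z(x_0)L(z^\beta w)(x_0)$ (using $z^{\beta+1}(x_0)=z(x_0)z^\beta(x_0)$), and the third and fourth terms reconstruct $-\tfrac{\beta}{\beta+1}w(x_0)L(z^{\beta+1})(x_0)$.

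Next I would use the only two structural facts at hand: $z\ge 0$, and $x_0$ being a maximum point of $w$, so that $w(y)\le w(x_0)$ for all $y$. Since $z(x_0)z^\beta(y)\ge 0$, this gives $z(x_0)z^\beta(y)w(y)\le z(x_0)z^\beta(y)w(x_0)$ — a bound that needs no sign assumption on $w(x_0)$ — and hence, after factoring $w(x_0)$ out of the remaining expression,
\[
\Phi(y)\le w(x_0)\,G(x_0,y),\qquad G(x_0,y):=z(x_0)z^\beta(y)-\frac{1}{\beta+1}z^{\beta+1}(x_0)-\frac{\beta}{\beta+1}z^{\beta+1}(y).
\]
The key point is that $G(x_0,y)\le 0$ for all $x_0,y$: this is exactly Young's inequality $ab^\beta\le\frac{1}{\beta+1}a^{\beta+1}+\frac{\beta}{\beta+1}b^{\beta+1}$ for $a,b\ge 0$ (conjugate exponents $\beta+1$ and $(\beta+1)/\beta$ if $\beta>0$; an identity if $\beta=0$), applied with $a=z(x_0)$ and $b=z(y)$.

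Finally I would define
\[
A_z(x_0):=\int_{\rr}J(x_0-y)\,G(x_0,y)\,dy,
\]
which is finite because $z\in L^\infty(\rr)$ makes $G(x_0,\cdot)$ bounded while $J\in L^1(\rr)$, and which is nonpositive because $J\ge 0$ and $G(x_0,\cdot)\le 0$. Integrating the pointwise inequality $J(x_0-y)\Phi(y)\le J(x_0-y)w(x_0)G(x_0,y)$ over $y\in\rr$ then gives $\big(zL(z^\beta w)-\tfrac{\beta}{\beta+1}wL(z^{\beta+1})\big)(x_0)\le A_z(x_0)w(x_0)$, which is \eqref{est.punctual}; the degenerate cases $\beta=0$ (where $G\equiv 0$, so $A_z\equiv 0$) and $z(x_0)=0$ are covered by the same computation. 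I expect the only genuine obstacle to be recognizing the Young-type inequality that fixes the sign of the integrand — once that is in place, the argument is just algebraic bookkeeping combined with the elementary consequence of $x_0$ being a maximum.
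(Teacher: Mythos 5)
Your proposal is correct and follows essentially the same route as the paper: expand both nonlocal terms into a single integral against $J(x_0-y)$, use $w(y)\le w(x_0)$ together with $z(x_0)z^\beta(y)\ge 0$ to factor out $w(x_0)$, and recognize the remaining bracket as nonpositive by Young's inequality, defining $A_z$ as its integral against $J$. The paper's proof is word-for-word this argument (with the two constant terms already merged into $-\frac{1}{\beta+1}w(x_0)z^{\beta+1}(x_0)$), so there is nothing to add.
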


When we replace  $L$ by the classical Laplace operator the above inequality is satisfied for any $C^2(\rr)$ functions $w$ and $z$, with $z\geq 0$ (see Section \ref{positive}). This result was  used in \cite{MR1233647} (without being underlined)  to derive an Oleinik inequality for the solutions of system \eqref{gas1}.  However, it actually applies for more general operators, as the ones we study in the present paper.

It is by now classical for conservation laws that Oleinik estimates  represent a systematic tool to determine the asymptotic behavior of the corresponding solutions.   In general a mass conservation system has not a unique solution. Despite of this, the uniqueness is guaranteed when imposing an extra condition to be satisfied which is well-known as an entropy condition (see Definition \ref{def1}, which in particular applies to system \eqref{Ldifusion} with $f(u)=|u|^{q-1}u/q$ and $\alpha=1$).

As an application of Theorem \ref{main} we obtain Oleinik type estimates for the nonnegative solutions of \eqref{Ldifusion} as follows.

\begin{theorem}\label{oleinik_theorem}
Let $1< q\leq 2$ and $\varphi\in L^1(\rr)$ be a nonnegative initial datum with mass $M$. Then, the entropy solution $u$ of \eqref{Ldifusion} satisfies
\begin{equation}\label{ineq-oleinik}
\left(u^{q-1}\right)_x(t)\leq \frac{1}{t}, \quad \textrm{ in } \dd'(\rr), \quad \forall t>0.
\end{equation}
Moreover, there exists a constant $C(M, p, q)>0$ depending on $M, p$ and  $q$, such that
\begin{equation}\label{est.u.5}
\|u(t)\|_{L^p(\rr)}\leq C(M,p,q)t^{-\frac {1}q \left(1-\frac 1p\right)}, \quad \forall t>0, \  \forall 1\leq p\leq \infty.
\end{equation}
 \end{theorem}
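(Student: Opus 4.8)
The plan is to obtain \eqref{ineq-oleinik} first at the level of the viscous approximation $u^\mu$ solving $u^\mu_t + |u^\mu|^{q-1}u^\mu_x = Lu^\mu + \mu u^\mu_{xx}$, and then pass to the limit $\mu\to 0$. Since $\varphi\ge 0$, the comparison principle gives $u^\mu\ge 0$, and for smooth positive data $u^\mu$ is smooth and strictly positive on any bounded time interval. Set $z=u^\mu$ and $w=(z^{q-1})_x = (q-1)z^{q-2}z_x$. The strategy is to derive a differential inequality for $w$ of the Riccati type $w_t + (\text{transport}) \le -c\,w^2 + (\text{good terms from }L)$, so that $w(t)\le 1/t$ follows by comparison with the ODE $y'=-\frac{1}{q-1}y^2$ whose solution emanating from $+\infty$ is exactly... here one has to be careful with the constant; differentiating $z^{q-1}$ and using the PDE, the local diffusion $\mu z_{xx}$ and the convective term produce (as in \cite{MR1233647}) a term $-\frac{1}{q-1}w^2$ up to lower order, which integrates to the bound $1/t$. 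The nonlocal contribution is precisely the combination $z L(z^{q-1}) - \frac{q-1}{q}\,?\,L(z^{q})$-type expression appearing in Theorem \ref{main}: evaluating the evolution of $w$ at a point $x_0$ where $w(\cdot,t)$ attains its spatial maximum, Theorem \ref{main} with $\beta = q-2$ (so that $\beta+1=q-1$, noting $1<q\le 2$ gives $\beta\in(-1,0]$, which one must check is still admissible, or else rearrange to apply it with a nonnegative exponent) yields that the nonlocal term is $\le A_z(x_0)w(x_0)\le 0$ since $A_z\le 0$. Hence at a maximum point $w$ satisfies $\frac{d}{dt}\max_x w \le -\frac{1}{q-1}(\max_x w)^2$, giving $\max_x w(t)\le \frac{q-1}{t}$; a more careful bookkeeping of the constants (matching \eqref{ineq-oleinik}) gives exactly $1/t$.

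The rigorous execution of this maximum-principle argument requires knowing that $w^\mu(\cdot,t)$ does attain its maximum and is continuous and bounded — this follows from parabolic regularity for the viscous problem with smooth, rapidly decaying data; one then removes the regularity/decay assumptions on $\varphi$ by approximation, using the $L^1$-contraction and well-posedness of entropy solutions stated in the abstract. Passing $\mu\to 0$: since $u^\mu\to u$ (the entropy solution) in $C([0,\infty);L^1(\rr))$ by the convergence result announced in the abstract, and the bound $(u^\mu)^{q-1}_x(t)\le 1/t$ is uniform in $\mu$, it passes to the distributional limit, yielding \eqref{ineq-oleinik}.

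For the decay estimate \eqref{est.u.5} I would use a scaling argument combined with \eqref{ineq-oleinik}. The point is that $(u^{q-1})_x\le 1/t$ means $x\mapsto u^{q-1}(x,t) - x/t$ is nonincreasing, so for fixed $t$, $u^{q-1}(x,t)$ can only exceed a given level $\ell$ on a half-line-type set of controlled measure once combined with the mass constraint $\int u(\cdot,t) = M$. Concretely: if $u(x_0,t)=\|u(t)\|_\infty =: N$, then for $x<x_0$ one has $u^{q-1}(x,t)\ge N^{q-1} - (x-x_0)/t = N^{q-1} + (x_0-x)/t$, so $u(x,t)\ge (N^{q-1})^{1/(q-1)}\ge$ a definite fraction of $N$ on an interval of length $\sim t N^{q-1}$ to the left of $x_0$; integrating against the mass bound gives $M \gtrsim N\cdot t N^{q-1}$, i.e. $N \lesssim (M/t)^{1/q}$, which is \eqref{est.u.5} for $p=\infty$. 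The case $1\le p<\infty$ then follows by interpolation between the conserved $L^1$ norm ($\|u(t)\|_1\le \|\varphi\|_1 = M$) and the $L^\infty$ bound: $\|u(t)\|_p \le \|u(t)\|_1^{1/p}\|u(t)\|_\infty^{1-1/p} \le M^{1/p}\big(C(M,q)t^{-1/q}\big)^{1-1/p}$, which gives the stated exponent $-\frac1q(1-\frac1p)$.

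The main obstacle I anticipate is the admissibility of Theorem \ref{main} in the regime needed here: with $\beta=q-2\le 0$ the hypothesis "$\beta\ge 0$" is violated, so one must instead set $z=u^\mu$, $w=(z^{q-1})_x$ and recast the nonlocal term so that Theorem \ref{main} applies with a genuinely nonnegative exponent (for instance by working with $v=z^{q-1}$ as the basic unknown, for which the natural exponent becomes $\beta=0$ or $\beta=1/(q-1)\ge 0$), and then translate back. Getting the Riccati constant to come out as exactly $1/t$ rather than $(q-1)/t$ or similar — i.e.\ choosing the right power to differentiate — is the other delicate bookkeeping point, but it is a routine (if careful) computation once the structure is set up.
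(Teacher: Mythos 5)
Your architecture is the same as the paper's: prove the Oleinik bound for the viscous approximation by evaluating the equation for $w=(\,(\umu)^{q-1})_x$ at a spatial maximum, invoke Theorem \ref{main} there, integrate a Riccati-type inequality for $\sup_x w$, pass to the limit $\mu\to 0$ via the $C([0,T];L^1(\rr))$ convergence, and deduce \eqref{est.u.5} from the Oleinik bound plus mass conservation and interpolation. However, the one step that carries the whole theorem --- putting the nonlocal contribution into the exact form required by Theorem \ref{main} with an admissible exponent --- is left unresolved in your write-up. Your first attempt takes $\beta=q-2\le 0$, which violates the hypothesis $\beta\ge 0$, and your proposed repair only guesses ``$\beta=0$ or $\beta=1/(q-1)$'', neither of which is what the computation yields. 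The paper's proof of Theorem \ref{oleinik} does the bookkeeping as follows: set $z=(\umu)^{q-1}$ as the basic unknown; then $z$ solves $z_t+zz_x=(q-1)z^{1-\frac1{q-1}}L\l(z^{\frac1{q-1}}\r)+\mu\l(\beta z_x^2/z+z_{xx}\r)$ with $\beta=\frac{2-q}{q-1}\ge 0$, and differentiating in $x$, the function $w=z_x$ satisfies an equation whose nonlocal right-hand side is exactly $z^{-1-\beta}\l(zL(z^\beta w)-\frac{\beta}{\beta+1}wL(z^{\beta+1})\r)$, so Theorem \ref{main} applies with $\beta=\frac{2-q}{q-1}$ (note $\beta+1=\frac1{q-1}$, i.e. $z^{\beta+1}=\umu$); this is precisely where $1<q\le 2$ enters. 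The same change of variables also disposes of your Riccati-constant worry: since the convective term for $z$ is $zz_x$, its derivative is $w^2+zw_x$, so the coefficient of $w^2$ is exactly $1$ and $\sup_x w(t)\le 1/t$ comes out clean. Without this computation the heart of your argument is an announced hope rather than a proof; it is exactly the obstacle you flagged, but the fix you sketch does not yet work as stated.

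Two smaller points. In your $L^\infty$ argument the inequality runs the wrong way: for $x<x_0$ the Oleinik bound gives $u^{q-1}(x,t)\ge u^{q-1}(x_0,t)-(x_0-x)/t$, not $+(x_0-x)/t$; after the sign fix the conclusion $M\gtrsim tN^q$ and hence \eqref{est.u.5} for $p=\infty$, followed by interpolation with the conserved $L^1$ norm, is correct and is the classical argument from \cite{MR1233647} that the paper quotes for \eqref{est.infty}--\eqref{est.5} in Lemma \ref{linfty}. Also, the rigorous execution requires the two ingredients the paper makes explicit: approximation by smooth data with $\eps\le\varphi\le M$ so that $\umu$ is a bounded, positive classical solution, and the fact that $W(t)=\sup_x w(t,x)$ is locally Lipschitz (so the differential inequality holds a.e.\ and integrates); and in the limit $\mu\to 0$ the case $p=\infty$ of \eqref{est.u.5} follows from a.e.\ convergence (Theorem \ref{global.convergence}), since $C([0,T];L^1(\rr))$ convergence alone only yields $p<\infty$. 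These are consistent with your plan and easy to repair.
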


The main result of this paper concerns the long time behaviour of the solutions of system \eqref{Ldifusion} and it is given in the following theorem.
\begin{theorem}\label{asimp} Assume that  $J\in L^1(\rr,1+|x|^2)$ and $1<q<2$.
For any  initial datum $\varphi\in L^1(\rr)$,  the entropy solution $u$ of system \eqref{Ldifusion} satisfies
\begin{equation}\label{lim.t}
\lim _{t\rightarrow \infty} t^{\frac  1q\left(1-\frac 1p\right)}\|u(t)-w_M(t)\|_{L^p(\rr)}=0, \quad 1\leq p < \infty,
\end{equation}
where $w_M$ is the unique entropy solution of the equation
 \begin{equation}\label{gas}
\left\{
\begin{array}{ll}
w_t  + ({|w|^{q-1}w}/q)_x= 0,& x\in \rr, \ t>0,\\[10pt]
w(x)=M\delta_0,
\end{array}
\right.
\end{equation}
  whenever the  mass $M$ of the initial datum is nontrivial.
  \end{theorem}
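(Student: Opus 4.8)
The natural strategy is a scaling (self‑similar) argument in the spirit of \cite{MR1233647}, combined with the hyperbolic estimates furnished by Theorem~\ref{oleinik_theorem}. For $\lambda>0$ set $u_\lambda(x,t):=\lambda\,u(\lambda x,\lambda^q t)$. A direct change of variables shows that $u_\lambda$ solves $(u_\lambda)_t+|u_\lambda|^{q-1}(u_\lambda)_x=\lambda^q L_{G_\lambda}u_\lambda$, where $L_{G_\lambda}$ is the operator \eqref{L} with the rescaled kernel $G_\lambda(s):=\lambda J(\lambda s)$, which again has mass one but concentrates at the origin as $\lambda\to\infty$, and $u_\lambda(\cdot,0)=\lambda\varphi(\lambda\,\cdot)$. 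Since both the conservation law in \eqref{gas} and the datum $M\delta_0$ are invariant under $v\mapsto \lambda v(\lambda\,\cdot,\lambda^q\cdot)$, one has $(w_M)_\lambda=w_M$. Writing the $L^p$ norm under this scaling yields, with $t=\lambda^q$, the identity $\|u_\lambda(\cdot,1)-w_M(\cdot,1)\|_{L^p(\rr)}=t^{\frac1q(1-\frac1p)}\|u(\cdot,t)-w_M(\cdot,t)\|_{L^p(\rr)}$, so \eqref{lim.t} is \emph{equivalent} to the statement that $u_\lambda(\cdot,1)\to w_M(\cdot,1)$ in $L^p(\rr)$ for every $1\le p<\infty$ as $\lambda\to\infty$.

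Next I would establish uniform‑in‑$\lambda$ bounds on $\{u_\lambda\}$. For a nonnegative datum the inequalities \eqref{ineq-oleinik} and \eqref{est.u.5} are themselves scale‑invariant, hence hold for $u_\lambda$ with the \emph{same} constants; in particular $\{u_\lambda\}$ is bounded in $L^\infty_{\mathrm{loc}}((0,\infty);L^1(\rr)\cap L^\infty(\rr))$, and the one‑sided bound $(u_\lambda^{q-1})_x(t)\le 1/t$ together with the $L^1$ bound gives uniform $BV_{\mathrm{loc}}$ control of $u_\lambda^{q-1}$ in the space variable on every strip $[\tau,T]\times\rr$. Equicontinuity in time comes from the equation: the flux $|u_\lambda|^{q-1}u_\lambda/q$ is bounded in $L^\infty_{\mathrm{loc}}((0,\infty);L^1(\rr))$, while the diffusion term $\lambda^q L_{G_\lambda}u_\lambda=\lambda^q(G_\lambda\ast u_\lambda-u_\lambda)$ has symbol $\lambda^q(\widehat J(\xi/\lambda)-1)$, which by the hypothesis $J\in L^1(\rr,1+|x|^2)$ is bounded by $C\lambda^{q-2}|\xi|^2$; since $q<2$ this term tends to $0$ in $\dd'(\rr)$ (indeed in $L^\infty_{\mathrm{loc}}((0,\infty);H^{-2}(\rr))$) as $\lambda\to\infty$. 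A standard compactness argument then extracts a subsequence with $u_{\lambda_n}\to\overline u$ in $C_{\mathrm{loc}}((0,\infty);L^p_{\mathrm{loc}}(\rr))$ and a.e.; a uniform first‑moment estimate (using the first moment of $J$) gives tightness of the mass, upgrading the convergence to $L^p(\rr)$. For a sign‑changing datum, where BV/Oleinik compactness is unavailable, I would instead obtain the compactness of $\{u_\lambda\}$ by the flux--entropy (compensated compactness) method, using the estimates for $L$ recorded in the earlier sections.

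It remains to pass to the limit and identify $\overline u$. Because the convective flux converges strongly — the uniform $L^1\cap L^\infty$ bounds provide the equi‑integrability needed to pass $|u_{\lambda_n}|^{q-1}u_{\lambda_n}\to|\overline u|^{q-1}\overline u$ in $L^1_{\mathrm{loc}}$ — and the nonlocal term vanishes, $\overline u$ is a weak solution of $w_t+(|w|^{q-1}w/q)_x=0$. Moreover each $u_{\lambda_n}$ satisfies the Kruzhkov entropy inequalities for \eqref{Ldifusion}, and the dissipativity of $L$, i.e. $\int_\rr \eta'(v)\,Lv\,dx\le 0$ for every convex $\eta$, means the nonlocal contribution has the favorable sign; letting $\lambda_n\to\infty$ preserves these inequalities, so $\overline u$ is an \emph{entropy} solution. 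For the initial trace, $u_\lambda(\cdot,0)=\lambda\varphi(\lambda\,\cdot)\rightharpoonup M\delta_0$ weakly as measures, and a uniform bound $\|u_\lambda(\cdot,t)-u_\lambda(\cdot,0)\|_{W^{-1,1}(\rr)}\to 0$ as $t\to0^+$, read off from the rescaled equation, transfers this to $\overline u(\cdot,t)\rightharpoonup M\delta_0$ as $t\to 0^+$. Invoking the known well‑posedness and self‑similarity of entropy solutions of \eqref{gas} with a Dirac datum, we conclude $\overline u=w_M$; since the limit does not depend on the subsequence, the whole family $u_\lambda(\cdot,1)$ converges to $w_M(\cdot,1)$ in $L^p(\rr)$, which is \eqref{lim.t}.

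The main obstacle, I expect, is the identification of the limit: one must simultaneously control the nonlocal operator so that it disappears in the limit \emph{and} keeps the entropy structure of the approximating problems intact — this is precisely where the moment hypotheses on $J$ enter — and one must carry out the initial‑layer analysis showing that no mass is lost at $t=0$, so that the limit inherits exactly the datum $M\delta_0$; this last point relies on the nontrivial uniqueness theory for scalar conservation laws with measure data. A secondary difficulty is that for sign‑changing initial data the Oleinik/BV compactness of the first step is not available and must be replaced by a compensated‑compactness (flux--entropy) argument.
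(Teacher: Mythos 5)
Your proposal follows essentially the same route as the paper: the hyperbolic rescaling $u_\lambda(t,x)=\lambda u(\lambda^q t,\lambda x)$, uniform Oleinik/$L^1$--$L^\infty$ bounds and tail control giving compactness (with Tartar's flux--entropy compensated-compactness argument replacing Oleinik for sign-changing data), the vanishing of the rescaled nonlocal term at rate $\lambda^{q-2}$, identification of the limit with the unique source (N-wave) solution $w_M$ of \eqref{gas} via the uniqueness result for Dirac data, and interpolation to pass from $p=1$ to all $p<\infty$. The only cosmetic difference is in the entropy-inequality passage: rather than invoking a ``favorable sign'' of $L$ (which does not hold pointwise once a test function is inserted), the paper bounds $|u_\lambda-k|-\sgn(u_\lambda-k)J_\lambda\ast(u_\lambda-k)$ from below and transfers $J_\lambda$ onto the test function, so the entire right-hand side is $O(\lambda^{q-2})$ --- exactly the symbol-type estimate you already use for the weak formulation.
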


As proved in \cite[Section 2]{MR735207}, there exists a unique entropy solution $w_M$ (see Section \ref{entropyintro} for the precise meaning) to system \eqref{gas} which is given by the $N$-wave profile
\begin{equation}\label{wave_profile}
w_M(t,x)=\left\{
\begin{array}{ll}
(x/t)^{\frac 1{q-1}},  & 0<x<r(t),     \\
  0, & otherwise,
\end{array}
\right.
\end{equation}
with $r(t)=(\frac q{q-1})^{\frac{q-1}q}M^{(q-1)/q}t^{1/q}$.

Here, in contrast to the case $q \geq 2$,   the asymptotic behavior of the nonlocal system \eqref{Ldifusion} is given by  the convective part. We prove that, as time becomes large, the nonlocal diffusion term can be neglected. Actually, this phenomenon  also occurs for the local problem \eqref{gas1} as shown in \cite{MR1233647}. We predict that in the multidimensional space  $\rr^d$, $d\geq 2$,  similar results remain valid but in a new range of exponents which depends on the dimension, i.e. $1< q< (d+1)/d$. This comes in the spirit of the results obtained in \cite{MR1266100} and  \cite{MR1440033}, where the authors analyze the problem \eqref{gas1}. They proved that, in any dimension,  for very large time the effect of the diffusion is negligible as compared to convection precisely when $1< q< (d+1)/d$. Particularly, to overcome the difficulties which appeared when dealing with changing sign solutions the author in \cite{MR1440033} used  kinetic type arguments in order to prove the compactness of the rescaled solutions.
A  system similar to \eqref{gas1} was studied in \cite{MR2047911} when the nonlinearity $|u|^{q-1}u$ is changed to $|u|^q$. Whether the analysis presented here can be done for even nonlinearities remains to be investigated.

We will first prove
Theorem \ref{asimp} in the case of  nonnegative solutions by combining Oleinik type estimates with compactness arguments.
 When one tries to extend the results to changing sign solutions new difficulties appear. For the sake of completeness, we prefer to present them independently since different tools are used in their proofs. In particular, we want to emphasize the limitation of the Oleinik estimates which fail in the case of changing sign solutions. For the latter case we must apply more sophisticated arguments similar to Tartar \cite{MR584398} to prove compactness results.  This issue goes in the direction of the previous works
 \cite{1103.46310}, \cite[Th. 6.2, p. 128]{1214.45002}, \cite{MR2041005}, \cite{Ignat:2013fk},  where some compactness arguments were adopted to nonlocal problems. A flux-entropy method inspired in Tartar \cite{MR584398} but working when the convection is nonlocal as in \cite{MR2888353, MR2356418,Ignat:2013fk} is, as far as the authors know, an open problem.

%

The problem we address here can be also analyzed in the case of other type of operators in convolution form as, for example,  the  Fractional Laplacian:
$$(Lu)(x)=-(-\Delta)^{s} u(x)=c_{s}P.V. \int _{\rr}\frac{u(y)-u(x)}{|y-x|^{1+2s}}dy,$$
$0< s <1$,  or  Levy type operators
\[
Lu=P.V. \int _{\rr} (u(x+y)-u(x))K(y)dy
\]
 where $c_s$ is a universal constant depending only on $s$ and $K\in L^1\left(\rr,\frac{|x|^2}{1+|x|^2}\right)$ is a nonnegative kernel with  $K(x)=K(-x)$, respectively. In the particular case $K(x)=c_s |x|^{-1-2s}$ it corresponds to the fractional Laplacian $(-\Delta)^{s}$.
Moreover, these operators are infinitesimal generators of symmetry Levy processes. For more details on the convection-diffusion equations with nonlocal diffusion of Levy type we refer to \cite{MR1849690} and more recently to \cite{MR2914243}.
 Since both Levy-type and fractional Laplacian kernels do not belong to $L^1(\rr, 1+|x|^2)$ as required in Theorem \ref{asimp}, a similar analysis as the one we develop here cannot be carried out in the context of such operators. Despite of this  we emphasize that our estimate \eqref{est.punctual} given in Theorem \ref{main}  also holds when $L$ is of Levy type under suitable conditions on the functions $z$ and $w$.


The paper is organized as follows. In Section \ref{entropyintro} we discuss the notion of entropy solutions for systems \eqref{Ldifusion} and  \eqref{gas} and present some results obtained previously  in this context.  In Section \ref{positive} we prove Theorem \ref{main} and we apply it to obtain Oleinik type estimates for nonnegative solutions of some regularized system for  \eqref{Ldifusion}.
In Section \ref{secconv} we prove that the solutions of the regularized system converges to the entropy solution of system \eqref{Ldifusion}. As a consequence we prove Theorem \ref{oleinik_theorem}. Section \ref{asybehavior} contains the proof of Theorem \ref{asimp} in the case of nonnegative solutions. Finally, in Section \ref{signchange}
 we prove the asymptotic behaviour in Theorem \ref{asimp} in the case of changing sign solutions.

\section{Nonlocal conservation laws}\label{entropyintro}
Let us now recall some well-posedness results about conservation laws with nonlocal terms.

\begin{definition}\label{def1} Let $\alpha>0$, $f(u)$ be a locally Lipschitz  flux and $\varphi$ a bounded measurable function.
A measurable function $u\in L^\infty((0, \infty)\times \rr)$ is an entropy solution to
\begin{equation}\label{claws}
\left\{
\begin{array}{ll}
u_t+(f(u))_x=\alpha(J\ast u-u),& x\in \rr,\  t>0,\\[10pt]
u(0)=\varphi,
\end{array}
\right.
\end{equation}
 if satisfies the following conditions:

 C1) For every constant $k\in \rr$ the following inequality holds
\begin{align}\label{def}
\partial_t |u-k| +\partial _x &[\sgn(u-k)(f(u)-f(k))]\\
\nonumber&\leq
\alpha \sgn(u-k)J\ast (u-k)-\alpha |u-k|\quad \text{in}\ \mathcal{D}'((0,\infty)\times \rr),
\end{align}
i.e. for any $\phi\in C_c^\infty((0,\infty)\times \rr)$, $\phi\geq 0$,
\begin{multline*}
\int_0^\infty \int_{\rr} \left(|u-k|\frac{\partial \phi}{\partial t} +\sgn(u-k)(f(u)-f(k))\frac{\partial \phi}{\partial x}\right) dxdt\\
\geq \alpha  \int_0^\infty \int _{\rr}[|u-k|-\sgn(u-k)J\ast (u-k)]\phi dxdt,
\end{multline*}

C2) For every $R>0$
\begin{equation}\label{lim.zero}
\limess_{t\downarrow 0} \int_{|x|<R}|u(t,x)-u_0(x)|dx=0.
\end{equation}

\end{definition}
In particular, the election $\alpha=1$ and $f(u)=|u|^{q-1}u/q$, with $1< q\leq 2$, corresponds to an entropy solution of \eqref{Ldifusion}.

In the case of $BV$ initial data, the existence of a unique entropy solution of \eqref{claws} was proved  in \cite{0793.76005}, as well as,  the $L^1$-contraction property:
\begin{equation}\label{l1contract}
\|u_1(t)-u_2(t)\|_{L^1(\rr)}\leq \|\varphi_1-\varphi_2\|_{L^1(\rr)},
\end{equation}
where $u_i$ is the entropy solution of \eqref{claws} with the initial datum $\varphi_i\in BV$, $i\in \{1, 2\}$.
The existence is obtained by classical vanishing viscosity method: consider $\umu$ the solution of the regularizing problem

\begin{equation}\label{umu}
\left\{
\begin{array}{ll}
\umu_t+(f(\umu))_x=\alpha(J\ast \umu-\umu)+\mu \umu_{xx},& x\in \rr, \ t>0,\\[10pt]
\umu(0)=\varphi,
\end{array}
\right.
\end{equation}
and prove the compactness of the family $\{\umu\}_{\mu}$ (as $\mu$ tends to zero) in a suitable functional space. In order to make the reading easier, since $\alpha$ is fixed and $\mu$ is a parameter that tends to zero,  we omit the index $\alpha$ and only keep the notation $u^\mu$ when referring to the solution of systems like \eqref{umu}.

For $BV$ initial data $\varphi$, it was proved in \cite{0793.76005} that for any $t>0$, $\umu(t)$ converges in $L^1(\rr)$ to $u(t)$ the unique entropy solution of \eqref{claws}.
The case of  initial data $\varphi\in L^1(\rr)\cap L^\infty(\rr)$ was  considered in \cite{1052.35126}.  More precisely, the authors proved that for any $T>0$ the solution $u^\mu$ of \eqref{umu} satisfies
\[
\umu \rightarrow u \quad \text{in}\quad L^p_{loc}((0,T)\times \rr), \ 1\leq p<\infty, \quad \textrm{as } \mu\rightarrow 0,
\]
where the limit point $u\in C([0,T], L^1(\rr))\cap L^\infty((0,T)\times\rr)$ is an entropy solution of \eqref{claws}.
The $L^1(\rr)$-stability property \eqref{l1contract} also holds (cf. \cite[Theorem ~2.5]{1052.35126}) in this case without requiring $BV$ regularity of the  solutions.

Regarding the uniqueness of the entropy solution of system \eqref{claws}, i.e. functions that satisfy conditions C1) and C2), in \cite{1052.35126} it was proved that for any $\varphi \in L^1(\rr)\cap L^\infty(\rr)$ and any $T> 0$ there exists a unique solution in the class $L^\infty((0,T);L^1(\rr)\cap L^\infty(\rr))$.
The case when the initial datum belongs to $L^\infty(\rr)$ was considered in \cite[Theorem 2, p.~497]{MR2103702} under the additional assumption that $J\in L^1(\rr,1+|x|)$. Finally the uniqueness in the class $L^\infty((0,\infty),L^1(\rr))\cap L^\infty_{loc}((0,\infty),L^\infty(\rr))$ was proved in \cite[Section~1.4, p.~493]{MR2103702}.
%

In the case when $f(u)=|u|^{q-1}u/q$, $1<q\leq 2$ we complete the results in \cite{1052.35126} considering the case of $L^1(\rr)$ initial data.


  \begin{theorem}\label{global.convergence}
 Let  $\alpha>0$, $1<q\leq 2$ and $f(u)=|u|^{q-1}u/q$. For any  $\varphi\in L^1(\rr)$
  	there exists a unique entropy solution of system \eqref{claws} in the class
\[
  u\in L^\infty([0,\infty),L^1(\rr))\cap L^\infty_{loc}((0,\infty),L^\infty(\rr)).
\]
  	Moreover,   	the solutions $u^\mu$ of system \eqref{umu} satisfy
  	\begin{equation}
\label{conv.fuerte}
  \umu\rightarrow u\quad \text{in}\quad C([0,T],L^1(\rr)),
\end{equation}
and
\[
  \umu(t)\rightarrow u(t)\quad \text{in}\quad L^p(\rr), \quad  \forall t>0,\ \forall 1\leq p<\infty,
\]
as $\mu\to 0$.
  \end{theorem}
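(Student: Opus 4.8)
\emph{Strategy and setup.} The plan is to reduce the case $\varphi\in L^1(\rr)$ to the already understood case $\varphi\in L^1(\rr)\cap L^\infty(\rr)$ treated in \cite{1052.35126,MR2103702}, by a truncation argument, using the $L^1$-contraction \eqref{l1contract} to control the limit and a uniform-in-$\mu$ parabolic smoothing estimate to place the limit in $L^\infty_{loc}((0,\infty),L^\infty(\rr))$. Set $\varphi_n=\max(-n,\min(n,\varphi))$, so that $\varphi_n\in L^1(\rr)\cap L^\infty(\rr)$, $\varphi_n\to\varphi$ in $L^1(\rr)$, $\|\varphi_n\|_{L^1(\rr)}\le\|\varphi\|_{L^1(\rr)}=:M$, and $\varphi_n\ge 0$ whenever $\varphi\ge 0$. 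Let $u_n^\mu$ be the solution of \eqref{umu} with datum $\varphi_n$ and $u_n$ the corresponding entropy solution of \eqref{claws}; by \cite{1052.35126,MR2103702} the latter exists, is unique in $C([0,T],L^1(\rr))\cap L^\infty((0,T)\times\rr)$, satisfies \eqref{l1contract}, and is the $L^p_{loc}$-limit of $u_n^\mu$ as $\mu\to 0$, while the $L^1$-contraction also holds for $u_n^\mu$ uniformly in $\mu$.

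\emph{Uniform estimates.} Applying \eqref{l1contract} to the pairs $(u_n^\mu,0)$ and to the translates $u_n^\mu(\cdot\,,\cdot+z)$ (which solve \eqref{umu} with shifted data) gives, uniformly in $n$ and $\mu$, the mass bound $\|u_n^\mu(t)\|_{L^1(\rr)}\le M$ together with a spatial $L^1$-modulus of continuity inherited from the uniformly $L^1$-equicontinuous family $\{\varphi_n\}$; a standard cut-off argument in \eqref{umu} adds uniform tightness. The decisive point is the uniform $L^1$--$L^\infty$ smoothing $\|u_n^\mu(t)\|_{L^\infty(\rr)}\le C(M,q)\,t^{-1/q}$ for $t>0$, uniformly in $n,\mu$. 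When $\varphi\ge 0$ this follows by inserting the mass bound into the uniform Oleinik estimate $\bigl((u_n^\mu)^{q-1}\bigr)_x(t)\le 1/t$ for the viscous approximation, established in Section \ref{positive} as a consequence of Theorem \ref{main} (the one-sided bound together with $\|u_n^\mu(t)\|_{L^1}\le M$ forces the stated decay). For sign-changing $\varphi$ we remove the sign restriction by comparison: since the flux $|u|^{q-1}u/q$ is odd and $L$ is linear, $v\mapsto -v$ maps solutions of \eqref{umu} to solutions, and since $-|\varphi_n|\le\varphi_n\le|\varphi_n|$ the comparison principle for the parabolic problem \eqref{umu} (whose zeroth-order nonlocal part $\alpha(J\ast\cdot-\cdot)$ is order-preserving) gives $|u_n^\mu(t)|\le v_n^\mu(t)$, where $v_n^\mu\ge 0$ solves \eqref{umu} with datum $|\varphi_n|$; the nonnegative case then applies to $v_n^\mu$.

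\emph{Passage to the limit and uniqueness.} For fixed $n$, the uniform-in-$\mu$ bounds above — $L^1$-equiboundedness, spatial $L^1$-equicontinuity, tightness, and the $L^\infty$ bound for $t$ away from $0$, which via \eqref{umu} yields $L^1$-equicontinuity in time on $[\tau,T]$ — upgrade the convergence $u_n^\mu\to u_n$ to $C([\tau,T],L^1(\rr))$, hence (using $u_n\in C([0,T],L^1(\rr))$) to $C([0,T],L^1(\rr))$; the same bounds are inherited by $u_n$. Letting now $n\to\infty$, \eqref{l1contract} gives $\sup_{[0,T]}\|u_n(t)-u_m(t)\|_{L^1(\rr)}\le\|\varphi_n-\varphi_m\|_{L^1(\rr)}\to 0$, so $u_n\to u$ in $C([0,T],L^1(\rr))$ for every $T>0$; passing to a subsequence converging a.e.\ and using the estimates of the previous step shows $u\in L^\infty([0,\infty),L^1(\rr))\cap L^\infty_{loc}((0,\infty),L^\infty(\rr))$ with $u(0)=\varphi$. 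On the support of any $\phi\in C_c^\infty((0,\infty)\times\rr)$ the $u_n$ are uniformly bounded, so $|u_n-k|\to|u-k|$ and $\sgn(u_n-k)(f(u_n)-f(k))\to\sgn(u-k)(f(u)-f(k))$ in $L^1_{loc}$ (arguing first for a.e.\ $k$ and then by continuity in $k$), while $J\ast(u_n-k)\to J\ast(u-k)$ in $L^1(\rr)$ since $\|J\|_{L^1(\rr)}=1$; thus $u$ satisfies condition C1), and condition C2) follows from $u\in C([0,T],L^1(\rr))$ with $u(0)=\varphi$. Combining the two limits diagonally gives $u^\mu\to u$ in $C([0,T],L^1(\rr))$, and interpolating the uniform $L^1\cap L^\infty$ bounds gives $u^\mu(t)\to u(t)$ in $L^p(\rr)$ for every $t>0$ and $1\le p<\infty$, i.e.\ \eqref{conv.fuerte}. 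Uniqueness in the class $L^\infty([0,\infty),L^1(\rr))\cap L^\infty_{loc}((0,\infty),L^\infty(\rr))$ is contained in \cite[Section~1.4]{MR2103702}; alternatively it follows by extending \eqref{l1contract} to this class by the same truncation.

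\emph{Main obstacle.} The heart of the argument is the uniform-in-$\mu$ $L^1$--$L^\infty$ smoothing estimate for \emph{sign-changing} data, where Oleinik's inequality is genuinely unavailable; the device that resolves it is the comparison with the nonnegative solution issued from $|\varphi|$, which reduces the estimate to the Oleinik bound of Section \ref{positive}. The remaining technical point — upgrading the vanishing-viscosity convergence from $L^p_{loc}$ to $C([0,T],L^1(\rr))$ — is routine once the uniform mass, translation and tightness estimates are available.
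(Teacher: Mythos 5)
Your proposal is correct in substance and it shares with the paper the decisive new ingredient, namely the uniform-in-$\mu$ $L^1$--$L^\infty$ smoothing for sign-changing data obtained by comparing with the nonnegative solution issued from $|\varphi|$ and invoking the viscous Oleinik bound (this is precisely Remark \ref{rem1} combined with Theorem \ref{oleinik} and Lemma \ref{linfty}, i.e.\ Lemma \ref{est.sol.negativas}), as well as the appeal to \cite{MR2103702} for uniqueness in the stated class. The organization, however, is genuinely different. The paper never truncates the data: the viscous problem \eqref{umu} is solved directly for $\varphi\in L^1(\rr)$ (via \cite{0762.35011}), and one proves uniform space/time translation estimates whose constants involve only $\|\varphi\|_{L^1(\rr)}$ and the $t^{-1/q}$ decay rather than $\|\varphi\|_{L^\infty(\rr)}$ (Lemma \ref{comp.changing.sign}), plus a tail bound (Lemma \ref{tails.control}); this gives relative compactness of $(\umu)_\mu$ in $C([0,T],L^1(\rr))$, the limit is identified as an entropy solution following \cite{1052.35126}, and uniqueness from \cite{MR2103702} upgrades subsequential convergence to convergence of the whole family. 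You instead truncate the datum, rely on the known $L^1\cap L^\infty$ theory of \cite{1052.35126,MR2103702} for each fixed $n$, and pass to $L^1$ data through the contraction \eqref{mp2}/\eqref{l1contract} and a three-term diagonal argument; this is more economical for the well-posedness statement alone and handles continuity at $t=0$ cleanly, but it outsources the fixed-$n$ vanishing-viscosity step (and its upgrade to $C([0,T],L^1)$) to prior work and does not produce the quantitative moduli of continuity that the paper reuses later, since the same translation and tail estimates (Lemmas \ref{comp.changing.sign2} and \ref{tails.control}) drive the compactness of the rescaled family $u_\la$ in Section \ref{asybehavior}. Two points to tighten: your diagonal step tacitly needs well-posedness of \eqref{umu} for the untruncated $L^1$ datum together with the contraction of Proposition \ref{max.principle} applied to the pair $(u^\mu,u^\mu_n)$ -- available, but it should be said, since it is also what gives the uniform $L^\infty$ smoothing for $u^\mu$ itself used in the final interpolation; and the aside that uniqueness in the class could ``alternatively'' follow by the same truncation is too quick, because density of the data only identifies the solution you construct, not an arbitrary entropy solution in the class, so the stability/doubling result of \cite[Section~1.4]{MR2103702} (as the paper uses) is genuinely needed there.
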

The proof of Theorem \ref{global.convergence} is given at the end of Section \ref{secconv}.

%
%

%
%
%
%
%

We recall that the case
 $q=2$ was considered in \cite{1070.35065} when the initial datum belongs to $L^1(\rr)\cap L^\infty(\rr)$. However, the results obtained in \cite{1070.35065} allow to prove the well-posedness requiring only $L^1(\rr)$-initial data.

Let us now say  few words about the case $\alpha=0$ and $f(u)=|u|^{q-1}u/q, \ q>1.$

\begin{definition}\label{entropysol} By an entropy solution of system \eqref{gas}  we mean a function
\[
w\in L^\infty((0,\infty),L^1(\rr))\cap L^\infty((\tau,\infty)\times \rr), \ \forall \tau\in (0,\infty)
\]
such that:

 C3) For every constant $k\in \rr$ and $\phi\in C_c^\infty((0,\infty)\times \rr)$, $\phi\geq 0$,  the following inequality holds
\begin{align*}
\int_0^\infty \int_{\rr} \left(|w-k|\frac{\partial \phi}{\partial t} +\sgn(w-k)(f(w)-f(k))\frac{\partial \phi}{\partial x}\right) dxdt\geq 0,
\end{align*}

C4) For any bounded continuous function $\psi$
\begin{equation}\label{lim.zero.2}
\limess_{t\downarrow 0} \int _\rr w(t,x)\psi(x)dx=M\psi(0).
\end{equation}
\end{definition}

The existence of an unique entropy solution of system \eqref{gas}, as well as its properties were deeply analyzed in \cite{MR735207}. For further details we suggest the reader to explore the quoted paper \cite{MR735207}.

\section{Oleinik type estimates}\label{positive}

 In this section our first goal is to prove Theorem \ref{main}.  Secondly, we  apply Theorem \ref{main} to obtain Oleinik estimates and uniform $L^p$-bounds with respect to $\alpha$ and $\mu$ for the solutions of the regularized system \eqref{umu}. In particular, we prove that estimates \eqref{ineq-oleinik} and   \eqref{est.u.5} in Theorem \ref{oleinik_theorem} are verified for  the regularized solutions of \eqref{umu} (see e.g. Lemma \ref{linfty}).

\subsection{Comparison principle for the nonlocal operator}\label{nonlocalMP}
As a motivation, we emphasize that in the case of the classical Laplace operator, $Lu=u_{xx}$, explicit computations  yield to:
\begin{align*}
zL(z^\beta w)-\frac{\beta}{\beta+1}wL(z^{\beta+1})&=
z\left(\beta(\beta-1)z^{\beta-2}z_x^2 w+\beta z^{\beta-1}z_{xx}w+2\beta z^{\beta-1}z_xw_x +z^\beta w_{xx}\right) \\
&\quad \quad -\beta w\left(\beta z^{\beta-1}z_x^2 +z^\beta z_{xx}\right) \\
&= z^{\beta+1}w_{xx}+2\beta z^{\beta}z_x w_x- \beta z^{\beta-1}z_x^2 w.
\end{align*}
Observe that assuming $x_0$ is the point where $w$ attains its maximum ($w(x_0)=\max _{\rr} w$) we have
$$\l(zL\l(z^\beta w\r)-\frac{\beta}{\beta+1}wL\l(z^{\beta+1}\r)\r)(x_0)\leq -\beta z^{\beta-1}z_x^2w(x_0).$$
This shows that in this case we can choose $A_z(x)=-\beta z^{\beta-1} z_x^2 (x)$ and the estimate of Theorem \ref{main} holds.

\begin{proof}[Proof of Theorem \ref{main}]
Let us now consider an integral operator in the form \eqref{L}. It follows that
\begin{align*}
I_\beta(x)&=\left(zL(z^\beta w)-\frac{\beta}{\beta+1}wL(z^{\beta+1})\right)(x)\\
&=z(x)\int _{\rr}J(x-y)\l(z^\beta(y) w(y)-z^\beta(x) w(x)\r)dy\\
&-\frac \beta{\beta+1} w(x)\int _{\rr}J(x-y)\l(z^{\beta+1}(y)-z^{\beta+1}(x)\r)dy.
\end{align*}
Let $x_0$ be the point where $w$ attains its maximum. Since $J$ has mass one, $J\geq 0$  and $z$ is nonnegative we can write
\begin{align*}
I_\beta(x_0)&=\int _{\rr}J(x_0-y)\left( z(x_0)z^\beta(y) w(y)-\frac{\beta}{\beta+1} w(x_0)z^{\beta+1}(y)-\frac 1{\beta+1} w(x_0)z^{\beta+1}(x_0)\right)dy\\
&\leq w(x_0)\int _{\rr}J(x_0-y)\left( z(x_0)z^\beta(y) -\frac{\beta}{\beta+1} z^{\beta+1}(y)-\frac 1{\beta+1} z^{\beta+1}(x_0)\right)dy.
\end{align*}
Denoting by
$$A_z(x):= \int _{\rr}J(x-y)\left( z(x)z^\beta(y) -\frac{\beta}{\beta+1} z^{\beta+1}(y)-\frac 1{\beta+1} z^{\beta+1}(x)\right)dy,$$ by  Young's inequality we obtain that $A_z(x)\leq 0$ and the desired inequality holds.
\end{proof}

\subsection{Regularized system}\label{reg}

%


For $1<q< 2$ let us now consider the regularized problem
\begin{equation}\label{umu.1}
\left\{
\begin{array}{ll}
\umu_t+|\umu|^{q-1} (\umu)_x=\alpha(J\ast \umu-\umu) +\mu \umu_{xx},& x\in \rr,\ t>0,\\[10pt]
\umu(0)=\varphi.
\end{array}
\right.
\end{equation}
 We emphasize that all the results obtained in this section are uniform with respect to the positive parameters $\alpha$ and $\mu$.

Following closely the analysis done in \cite{0762.35011},
for any $\varphi\in L^1(\rr)$  we obtain a unique solution $\umu \in C([0,\infty),L^1(\rr))$ of \eqref{umu.1} that satisfies
\[
\umu \in  C((0,\infty),W^{2,p}(\rr))\cap C^1((0,\infty),L^p(\rr)), \ 1<p<\infty.
\]
When integrating equation \eqref{umu.1} in the space variable we obtain that
\begin{equation}\label{conservation.property}
  \int _{\rr}\umu(t,x)dx=\int _{\rr} \varphi(x)dx.
\end{equation}
 These results, analyzed in details in \cite{0762.35011} by using the smoothing properties of the heat kernel, are better than the ones in \cite[Appendix A]{1052.35126}  which deal only with the case of $L^1(\rr)\cap L^\infty(\rr)$ initial data. Moreover, the solution is nonnegative if the initial data is nonnegative as a consequence of the following comparison principle.

\begin{proposition}[Comparison Principle]\label{max.principle}
Assume $\varphi, \tilde{\varphi}\in L^1(\rr)$ and let $\umu, \tilde{u}^{\mu}$ be the corresponding  solutions of \eqref{umu.1} with the initial data $\varphi$ and $\tilde{\varphi}$ respectively. Then it holds,
\begin{equation}\label{mp1}
 \quad \int_{\rr} (\tilde{u}^\mu(t, x)-u^\mu(t, x))^+ dx \leq \int_{\rr} (\tilde{\varphi}(x)-\varphi(x))^{+} dx, \quad \forall t>0.
 \end{equation}
 In addition,
 \begin{equation}\label{mp2}
 \|\tilde{u}^{\mu} (t, \cdot)-\umu(t, \cdot)\|_{L^1(\rr)}\leq \|\tilde{\varphi}-\varphi\|_{L^1(\rr)},
 \quad \forall t>0.
 \end{equation}
\end{proposition}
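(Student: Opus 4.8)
The plan is to prove the Comparison Principle by the classical doubling-of-variables / Kruzhkov technique adapted to the regularized parabolic-nonlocal equation, exploiting the smoothing already available from \cite{0762.35011}: the solutions $u^\mu,\tilde u^\mu$ are genuine classical solutions in $C((0,\infty),W^{2,p})\cap C^1((0,\infty),L^p)$, so we may differentiate freely. The heart of the argument is to test the equation satisfied by $v:=\tilde u^\mu-u^\mu$ against a regularized sign function and integrate in space. More precisely, first I would fix $\eps>0$ and a smooth nondecreasing approximation $\sgn_\eps$ of $\sgn^+$ (i.e. $\sgn_\eps'\geq 0$, $\sgn_\eps\to \mathbf 1_{(0,\infty)}$ as $\eps\to 0$) together with a primitive $\Phi_\eps$ of $\sgn_\eps$ approximating $s\mapsto s^+$. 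Subtracting the two equations and multiplying by $\sgn_\eps(v)$, then integrating over $\rr$, yields
\begin{equation*}
\frac{d}{dt}\int_\rr \Phi_\eps(v)\,dx = \int_\rr \sgn_\eps(v)\,\partial_x\big(f(u^\mu)-f(\tilde u^\mu)\big)\,dx + \alpha\!\int_\rr \sgn_\eps(v)\,(J\ast v - v)\,dx + \mu\!\int_\rr \sgn_\eps(v)\,v_{xx}\,dx,
\end{equation*}
with $f(u)=|u|^{q-1}u/q$.

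Next I would show each term on the right is nonpositive in the limit $\eps\to 0$. For the viscous term, integration by parts gives $\mu\int \sgn_\eps(v)v_{xx} = -\mu\int \sgn_\eps'(v) v_x^2 \leq 0$, so it drops out with the right sign. For the convective term, write $\partial_x(f(u^\mu)-f(\tilde u^\mu)) = -\partial_x\!\int_0^1 f'(\tilde u^\mu + \theta(u^\mu-\tilde u^\mu))\,d\theta\,\cdot v$ — more cleanly, set $F_\eps(s)=\int_0^s \sgn_\eps'(r) f'(\text{appropriate argument})\,dr$ to realize $\sgn_\eps(v)\,\partial_x(f(u^\mu)-f(\tilde u^\mu))$ as a perfect $x$-derivative plus a term that vanishes as $\eps\to 0$; integrating the derivative over $\rr$ gives zero because $v\in L^1\cap W^{2,p}$ decays. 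The genuinely nonlocal term is handled by the standard observation that $\int_\rr \sgn_\eps(v(x))\,(J\ast v)(x)\,dx \leq \int_\rr \sgn_\eps(v(x))\,v(x)\,dx$ in the limit (using $J\geq 0$, $\int J = 1$, convexity of $\Phi_\eps$ and Jensen, exactly as in the entropy formulation C1); hence $\alpha\int \sgn_\eps(v)(J\ast v - v)\to$ a nonpositive quantity. Passing $\eps\to 0$ we obtain $\frac{d}{dt}\int_\rr v^+\,dx \leq 0$, and integrating in time together with $v(0)=\tilde\varphi-\varphi$ gives \eqref{mp1}. Finally, \eqref{mp2} follows from \eqref{mp1} by adding the inequality with the roles of $\varphi$ and $\tilde\varphi$ exchanged, since $|a| = a^+ + (-a)^+$.

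The main obstacle I anticipate is making the convective term rigorously nonpositive (indeed, zero) rather than merely controlled: because $f'(u)=|u|^{q-1}$ is only Hölder, not Lipschitz, near $u=0$ when $q<2$, one must be careful that the chain-rule manipulation producing the perfect derivative is legitimate for the $W^{2,p}$ solutions and that the error terms really vanish as $\eps\to 0$ uniformly enough to integrate. A clean way around this is to localize with a cutoff $\chi_R(x)$, carry out the computation on $|x|<R$ picking up boundary terms of order $\|v\|_{L^1(|x|\sim R)}$, and then let $R\to\infty$ using $v(t)\in L^1(\rr)$; this also takes care of the decay needed to kill the total-derivative contributions from the viscous and convective terms. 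One should also note that the time derivative computation is justified for $t>0$ by the stated regularity, and the limit $t\downarrow 0$ is taken using $u^\mu\in C([0,\infty),L^1(\rr))$, so no extra argument is needed at the initial time.
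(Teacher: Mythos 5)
Your proposal is correct and follows essentially the same route as the paper: the paper's proof also tests the equation for the difference with (the sign of) its positive part, uses that the viscous and convective contributions are harmless, and relies on exactly your key nonlocal inequality $\int_\rr (J\ast v - v)\sgn(v^+)\,dx\leq 0$, with \eqref{mp2} then following by symmetry. Your version merely spells out the standard regularization ($\sgn_\eps$, cutoff $\chi_R$, modulus of continuity of $f'$) that the paper leaves implicit.
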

\begin{remark}\label{rem1}
An immediate consequence is the following comparison principle: if $\varphi, \tilde{\varphi}$ are two initial data such that $\varphi\leq \tilde{\varphi}$ then the corresponding solutions $u^
\mu, \tilde{u}^\mu$ satisfy $u^
\mu \leq \tilde{u}^\mu$.
\end{remark}
\begin{proof}We write the equation satisfied by $u^\mu - \tilde{u}^\mu $ and multiply it by $\sgn(u^\mu - \tilde{u}^\mu)^+$.
	Integrating the result we obtain the desired estimate since for any function $v\in L^1(\rr)$ we have
	\[
  \int_\rr (J \ast v-v)\sgn(v^+)dx\leq 0.
\]
A similar argument works for the second estimate.
\end{proof}

We complete the  results presented in Section \ref{nonlocalMP} with an Oleinik type estimate for the solutions of \eqref{umu.1}, estimate that will help us to prove later one of the main results of this paper,  Theorem \ref{asimp}. We point out that when $1<q<2$, $\alpha=0$ and $\mu=1$ the result was already obtained  in \cite{MR1233647}.
The same analysis in the case $q=2$ was obtained in
\cite{1070.35065}.

\begin{theorem}\label{oleinik} Let us consider two positive parameters $\alpha$ and $\mu$  and let $\varphi\in L^1(\rr)$ be nonnegative. Then, the  solution of system \eqref{umu.1} satisfies
\begin{equation}\label{one-side}
((\umu)^{q-1})_x(t,x)\leq \frac 1t, \quad    \forall t>0,  \textrm{ a.e. } x\in \rr.
\end{equation}
\end{theorem}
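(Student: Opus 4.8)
The plan is to derive a differential inequality for the quantity $w := ((\umu)^{q-1})_x$ (or rather for its supremum in $x$) and then integrate it, using Theorem~\ref{main} to control the nonlocal contribution. First I would set $v := \umu$ and $z := v^{q-1}$, so that $w = z_x$, and differentiate the equation \eqref{umu.1} with respect to $x$. Writing the convective term $|v|^{q-1}v_x = v^{q-1} v_x = \frac{1}{q}(v^q)_x$ (using $v\geq 0$, which holds by the comparison principle in Proposition~\ref{max.principle}), and noting that $z = v^{q-1}$ gives $z_x = (q-1)v^{q-2}v_x$, one can re-express everything in terms of $z$ and $w$. The expected outcome is an equation of Burgers--type structure for $w$: schematically $w_t + (\text{transport in }w) = \mu w_{xx} + \alpha\big(zL(\ldots) - \ldots\big) - c\, w^2 + (\text{lower order})$, where the crucial $-cw^2$ term with $c>0$ comes from the nonlinearity of the convective part, exactly as in \cite{MR1233647} for the local case. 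The key algebraic point is to identify that the nonlocal terms arising from differentiating $\alpha(J\ast v - v)$ assemble precisely into the combination $zL(z^\beta w) - \frac{\beta}{\beta+1}wL(z^{\beta+1})$ for the appropriate exponent $\beta$ (here $\beta$ should come out to be $\frac{1}{q-1}$ or $\frac{2-q}{q-1}$, to be matched by the chain-rule bookkeeping), so that Theorem~\ref{main} applies.

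Second, I would run a maximum-principle argument in $x$ for fixed $t$. Let $m(t) := \sup_{x\in\rr} w(t,x)$ (with the usual care that $w$ is only a weak derivative; in practice one works with the smooth regularized solution, whose regularity $\umu\in C((0,\infty),W^{2,p})$ from \cite{0762.35011} gives enough smoothness, and passes to the a.e. statement at the end). At a point $x_0$ where $w(t,\cdot)$ attains its maximum, the viscosity term satisfies $\mu w_{xx}(x_0)\leq 0$, the transport term in $w$ vanishes since $w_x(x_0)=0$, and by Theorem~\ref{main} the nonlocal term is bounded above by $A_z(x_0)w(x_0)\leq 0$ when $w(x_0)\geq 0$ (and if $w(x_0)<0$ then $m(t)<0<1/t$ and there is nothing to prove). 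Collecting these, one obtains $m'(t) \leq -c\, m(t)^2$ in the viscosity/Rademacher sense, i.e. $m$ is a subsolution of the ODE $y' = -cy^2$ whose solutions with arbitrary (even $+\infty$) initial data satisfy $y(t)\leq \frac{1}{ct}$ for all $t>0$. Choosing the normalization so that $c = 1$ — which should be automatic once the chain rule is done correctly, as in the local case — yields $w(t,x)\leq \frac{1}{t}$, which is \eqref{one-side}.

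The main obstacle I anticipate is twofold. First, the algebraic reduction: one must verify carefully that differentiating the nonlocal diffusion term and re-grouping via the substitution $z = v^{q-1}$, $w = z_x$ produces *exactly* the expression appearing in \eqref{est.punctual} with a nonpositive coefficient after applying Theorem~\ref{main}; getting the exponent $\beta$ and the constant in front of $w^2$ right is the delicate computational heart of the proof. Second, the rigor of the maximum-principle step: $w$ need not attain its supremum (the domain is unbounded and $w$ need only decay, not have compact support), so one should either work on a sequence of near-maximizing points and control the error terms $\mu w_{xx}$, $w_x$, and the nonlocal term at those points (using that $\umu(t)\in W^{2,p}$ so $w_x\to 0$ and $w_{xx}$ stays bounded along a suitable subsequence), or invoke a penalization/localization argument. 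Once the inequality $m' \leq -m^2$ is established rigorously, the ODE comparison is standard.
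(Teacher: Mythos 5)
Your plan follows essentially the same route as the paper's proof: the substitution $z=(\umu)^{q-1}$, $w=z_x$ (the correct exponent is $\beta=\tfrac{2-q}{q-1}$ and the coefficient of $w^2$ is exactly $1$), an application of Theorem \ref{main} at the maximum point to discard the nonlocal term, and the ODE comparison giving $\sup_x w(t,\cdot)\le 1/t$. The only difference is in the technical packaging: the paper first approximates by smooth initial data with $\epsilon\le\varphi\le M$, so that $\umu$ is a uniformly positive, bounded classical solution (which is what legitimizes the negative powers of $z$ appearing in the chain rule, a point your plan does not address), and it treats the supremum through the locally Lipschitz function $W(t)=\sup_x w(t,x)$ as in \cite{MR2962829} rather than through near-maximizers or penalization, passing to the limit $\epsilon\to 0$ at the end.
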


 \begin{proof}
Let us start as in \cite{MR1233647}. By approximating our solution by uniformly positive and bounded solutions we can  consider $\eps>0$ and an initial data $\varphi \in C^\infty(\rr)$ such that $\epsilon\leq \varphi \leq M$. This implies that $u^\mu$ is a uniformly bounded positive classical solution. Moreover, $\umu\in C^k_b([0,T]\times \rr)$, $k\geq 1$, with a norm that depends on $T$ and on the $C^k(\rr)$-norm of $\varphi$. 
We prove estimate \eqref{one-side} for such solutions and then passing to the limit as $\eps$ tends to zero the result in \eqref{one-side} holds for any nonnegative initial data $\varphi \in L^1(\rr)$.

Let us set $z=(\umu)^{q-1}$. 
For simplicity we avoid to emphasize the dependance of $z$ on $\mu$. Properties of $\umu$ transfer to $z$, so $z\in C^4_b([0,T]\times \rr)$.
Then we have
\[
u_t^\mu=\frac 1{q-1}z^{\frac 1{q-1}-1}z_t, \quad u_x^\mu=\frac 1{q-1}z^{\frac 1{q-1}-1}z_x,
\]
\[
u_{xx}^\mu=\frac 1{q-1}\l(\frac 1{q-1}-1\r)z^{\frac 1{q-1}-2}z_x^2+\frac 1{q-1}z^{\frac 1{q-1}-1}z_{xx}.
\]
It follows that $z$ verifies the equation
\[
z_t=(q-1)z^{1-\frac 1{q-1}} L\l(z^{\frac 1{q-1}}\r)-zz_x+\mu \l(\beta \frac{z_x^2}z+z_{xx}\r),
\]
where $\beta=\frac{2-q}{q-1}\geq 0$.
Putting $w=z_x$ and using that $L=\alpha(J\ast u-u)$  it follows that $w\in C_b^3([0,T]\times \rr)$ is a solution of
\[
w_t+zw_x+w^2+\mu
\l( \beta\frac{w^3}{z^2}-2\beta \frac{w}z w_x-w_{xx}
\r) =z^{\frac{q-2}{q-1}} L\l(z^{\frac{2-q}{q-1}}w\r)- (2-q)z^{-\frac 1{q-1}}w L\l(z^{\frac 1{q-1}}\r).
\]
Consequently,
\[
w_t+zw_x+w^2+\mu
\l( \beta\frac{w^3}{z^2}-2\beta \frac{w}z w_x-w_{xx}
\r) =z^{-1-\beta} \left( zL \l(z^\beta w\r)-\frac {\beta}{\beta+1} w L\l(z^{\beta+1}\r) \right).
\]

Let us denote $W(t)=\sup _{x\in \rr} w(t,x)$. Since $u^\mu$ is a uniformly bounded classical solution we can apply the same arguments as in \cite[Th. 1.18]{MR2962829} to show that $W$ is locally Lipschitz. In particular $W$ is absolutely continuous so differentiable almost everywhere and for any $0<t_1<t_2<\infty$ it satisfies
\[
  W(t_2)-W(t_1)=\int _{t_1}^{t_2} W'(s)ds.
\]

We now differentiate $W(t)$ for $t>0$ and obtain the equation satisfied by it. Let us choose $0< s<t$ and use the Taylor expansion in the time variable $t$:
\begin{align}
\nonumber w(x, t)&\leq w(x, t-s)+s w_t(x, t)+ C s^2 \\
\label{ineg.w}&\leq W(t-s)+s w_t (x, t) + C s^2.
\end{align}

In the case when for each fixed $t\geq 0$, function $w(t,x)$ attains its maximum at a point $x_t\in \rr$ we easily obtain by using 
  Theorem \ref{main}  that, for some function $A\leq 0$,    $W$ satisfies the following inequality
\[
W'(t)+W^2(t)+\mu \beta\frac{W^3(t)}{z^2(t)}\leq \frac{W(t)}{z^{\beta+1}(t)} A(t), \quad a.e.\ t>0.
\] 

It may happen that for some positive times $t$ the maximum of $w(t,x)$ is attained at $x=\infty$. In this case we slightly modify the arguments above to obtain a differential inequality for $W$.
Let us now consider the point $x=x_n$ such that $w(x_n , t)=W(t)-1/n$. We recall the following properties of sequence $\{w(t,x_n)\}_{n\geq 0}$ proved in \cite[Lemma~1.17]{MR2962829}  
\begin{equation}
\label{lim.prop.w}
  \lim _{n\rightarrow\infty} w_x(t,x_n)\rightarrow 0, \quad  \limsup _{n\rightarrow\infty}w_{xx}(t,x_n)\leq 0.
\end{equation}

Now we evaluate \eqref{ineg.w} at the point $x=x_n$.
\begin{align}\label{eva}
w(t,x_n,)\leq W(t-s) +s& \Bigg[- z(t,x_n ) w_x(t,x_n ,)-w^2 (t,x_n)\nonumber\\
&\quad -\mu\Big(\beta \frac{w^3(t,x_n)}{z^2(t,x_n)}-2\beta \frac{w(t,x_n)}{z(t,x_n)} w_x(t,x_n) -w_{xx}(t,x_n) \Big)\nonumber\\
&\quad+ z^{-1-\beta}(t,x_n)\Big(z L(z^\beta w)-\frac{\beta}{\beta+1} wL(z^{\beta+1})\Big)(t,x_n)
 \Bigg].
\end{align}
Recall that $\{z(x_n, t)\}_n$ is uniformly bounded. So, we can extract a subsequence such that $z(x_n, t)\rightarrow p(t)$, as $n\rightarrow \infty$, where
$\eps^{1/(q-1)}\leq p(t)\leq M^{1/(q-1)}$.

Let us analyze the last term in \eqref{eva}. At the point $x=x_n$ we have
\begin{align*}
I(t):&=\Big(z L(z^\beta w)-\frac{\beta}{\beta+1} w L(z^{\beta+1})\Big)(t,x_n)\\
&=\int_{\rr} J(x_n -y)\left(z(x_n) z^\beta(y) w(y)-\frac{\beta}{\beta+1} w(x_n) z^{\beta+1}(y) -\frac{1}{\beta+1} w(x_n) z^{\beta+1}(x_n)  \right) dy.
\end{align*}
We use that $w(t,x_n)=W(t)-1/n$. Thus
\begin{align}\label{baba}
I(t)&\leq W(t)\int_{\rr}J(x_n-y) \left(z(x_n) z^\beta(y) -\frac{\beta}{\beta+1} z^{\beta+1}(y)-\frac{1}{\beta+1} z^{\beta+1}(x_n) \right) dy\nonumber\\
& +\frac{1}{n}\int_{\rr}J(x-y) \left( \frac{\beta}{\beta+1} z^{\beta+1}(y) +\frac{1}{\beta+1} z^{\beta+1} (x_n)\right)dy\nonumber\\
&\leq W(t) A_{z(t)}(x_n )+ \frac{M^{\frac{\beta+1}{q-1}}}{n},
\end{align}
where 
\[
  A_z(x):=\int_{\rr}J(x-y) \left(z(x) z^\beta(y) -\frac{\beta}{\beta+1} z^{\beta+1}(y)-\frac{1}{\beta+1} z^{\beta+1}(x) \right) dy\leq 0
  \quad \forall\, x\in \rr.
\]
Since $|A_{z(t)}(x)|\leq 2 M^{\frac{\beta+1}{q-1}}$ we have, up to a subsequence, that
$$A_{z(t)}(x_n ) \rightarrow A(t)\leq 0, \quad n\rightarrow \infty.$$
Passing to the limit, for a subsequence of $\{x_n\}_{n}$ and also applying \eqref{lim.prop.w} we have
\begin{align*}
W(t)&\leq W(t-s) + s\left(-W^2(t)-\mu \beta \frac{W^3(t)}{p^2(t)}+ p^{-1-\beta}(t) W(t) A(t)  \right)
\end{align*}
 Letting $s\rightarrow 0$ we have
 $$W'(t)\leq -W^2(t)-\mu \beta \frac{W^3(t)}{p^2(t)}+\frac{W(t)A(t)}{p^{1+\beta}(t)},$$
 at any point $t$ where $W$ is differentiable.
 Multiplying the above equation by $\sgn(W^+)$, where $W^+$ is the positive part of $W$,  we obtain that  $W^+$ satisfies
\[
(W^+)'(t)+(W^+)^2(t)\leq 0, \quad a.e.\ t>0.
\]
Denoting $\eta(t)=W(t)-\frac 1t$ we obtain that $\eta'(t)+2\eta(t)/t+\eta^2(t)\leq 0$ for a.e. $t>0$. So $\eta^+ $ satisfies $(\eta^+)'\leq 0$  for a.e. $t>0$. Since $\eta^+$ is also an  absolutely continuous function we obtain that $\eta^+$ is a nonincreasing function: for any  $0<s<t$ we have $\eta^+(t)-\eta^+(s)=\int _s^t (\eta^+)'(\sigma)d\sigma\leq 0$. Moreover $\lim _{s\rightarrow 0} \eta^+(s)=0$ so $\eta^+(t)\leq 0$ for all $t>0$ so $W^+(t)\leq 1/t$ for all $t>0$:
We thus obtained exactly the  desired estimate for smooth solutions.
\[
((u^\mu(t, x))^{q-1})_x\leq \frac 1t.
\]
The proof is now complete.
\end{proof}

We emphasize that all the above results are independent of the parameters $\alpha$ and
$\mu$ in \eqref{umu.1}. As a consequence of them  we can deduce some extra properties of the solutions of equation \eqref{umu.1}.

\begin{lemma}\label{linfty}
For any $\varphi\in L^1(\rr)$ nonnegative, the solution $\umu$ of system \eqref{umu.1} satisfies
\begin{equation}\label{est.infty}
0\leq \umu (t,x)\leq \Big(\frac {qM}{(q-1)t}\Big)^{1/q}, \quad \forall t>0, \textrm{ a.e. } x
\in \rr,
\end{equation}


\begin{equation}\label{est.5}
\|\umu(t)\|_{L^p(\rr)}\leq C(M,p, q)t^{-\frac {1}q (1-\frac 1p)}, \quad \forall\  1\leq p< \infty, \quad \forall t>0,
\end{equation}
\begin{equation}\label{estt.6}
 \umu_x(t, x) \leq  C(M, q) t^{-2/q}, \quad \forall t>0, \textrm{ a.e. } x\in \rr,
\end{equation}
uniformly with respect to the positive parameters $\mu$ and $\alpha$, where $C(M, q)$ and $C(M, p, q)$ are positive constants. We point out that these constants are independent of $J$.
\end{lemma}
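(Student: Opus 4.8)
The plan is to derive all three estimates of Lemma~\ref{linfty} from the Oleinik inequality~\eqref{one-side} of Theorem~\ref{oleinik} together with the conservation of mass~\eqref{conservation.property} and the positivity of $\umu$, following the classical argument for convection-diffusion equations. The key observation is that~\eqref{one-side} together with nonnegativity forces a pointwise decay on $\umu$ itself.

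\textbf{Step 1: the $L^\infty$ bound~\eqref{est.infty}.} Fix $t>0$ and suppose, for contradiction, that at some point $x_*$ we have $\umu(t,x_*)=:a$ with $a$ large. Set $z=(\umu)^{q-1}$, so $z(t,x_*)=a^{q-1}$. From~\eqref{one-side}, $z_x(t,x)\le 1/t$ a.e., hence for $x<x_*$ we have $z(t,x)\ge z(t,x_*)-(x_*-x)/t = a^{q-1}-(x_*-x)/t$, which is positive (so $\umu(t,x)\ge (a^{q-1}-(x_*-x)/t)^{1/(q-1)}$) on the interval $x\in(x_*-a^{q-1}t,\,x_*)$ of length $a^{q-1}t$. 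Integrating this lower bound over that interval gives a lower bound for $\|\umu(t)\|_{L^1}$ of the form $c\,a^{q-1}t\cdot a = c\,a^q t$ (up to the exact constant coming from $\int_0^1(1-s)^{1/(q-1)}ds$, which depends only on $q$). Since $\|\umu(t)\|_{L^1}=M$ by~\eqref{conservation.property} and positivity, we get $a^q t\le C(q)M$, i.e. $a\le (C(q)M/t)^{1/q}$; optimizing the constant yields exactly the bound $\big(qM/((q-1)t)\big)^{1/q}$. This step is essentially the only place where real work happens; the constant-chasing to land on the stated sharp constant is a routine but slightly delicate computation.

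\textbf{Step 2: the $L^p$ bound~\eqref{est.5}.} This is immediate by interpolation: for $1\le p<\infty$,
\[
\|\umu(t)\|_{L^p(\rr)}\le \|\umu(t)\|_{L^\infty(\rr)}^{1-1/p}\,\|\umu(t)\|_{L^1(\rr)}^{1/p}
\le \Big(\tfrac{qM}{(q-1)t}\Big)^{\frac1q(1-\frac1p)} M^{1/p},
\]
using~\eqref{est.infty} and $\|\umu(t)\|_{L^1}=M$; this gives~\eqref{est.5} with $C(M,p,q)=M^{1/p}(q/(q-1))^{\frac1q(1-\frac1p)}$, uniformly in $\alpha,\mu$ and independent of $J$.

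\textbf{Step 3: the derivative bound~\eqref{estt.6}.} Write $u=\umu$ and $z=u^{q-1}$. Since $u_x = \tfrac{1}{q-1}z^{\frac{1}{q-1}-1}z_x = \tfrac{1}{q-1}u^{2-q}z_x$ and $u\ge0$, we have, using $z_x\le 1/t$ from~\eqref{one-side} and $u\le (qM/((q-1)t))^{1/q}$ from~\eqref{est.infty},
\[
u_x(t,x)\le \frac{1}{q-1}\,u(t,x)^{2-q}\,\frac1t \le \frac{1}{q-1}\Big(\frac{qM}{(q-1)t}\Big)^{\frac{2-q}{q}}\frac1t = C(M,q)\,t^{-2/q},
\]
since $\frac{2-q}{q}+1=\frac2q$. (Here one uses $2-q\ge0$, valid because $q\le 2$, so that the bound on $u$ transfers to a bound on $u^{2-q}$.) All bounds are manifestly uniform in $\alpha$ and $\mu$ and independent of $J$, as claimed. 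The main obstacle, as noted, is Step~1 — more precisely, organizing the integration-in-$x$ argument so the exact sharp constant $(q/(q-1))^{1/q}$ emerges; everything else is algebra and interpolation.
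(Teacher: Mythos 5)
Your proposal is correct and follows essentially the same route as the paper: for \eqref{est.infty} and \eqref{est.5} the paper simply defers to \cite{MR1233647}, whose argument is precisely your Oleinik-plus-mass-conservation computation (the constant $\left(\tfrac{qM}{(q-1)t}\right)^{1/q}$ indeed drops out of $\int_0^1(1-s)^{1/(q-1)}ds=\tfrac{q-1}{q}$, no optimization needed) followed by $L^1$--$L^\infty$ interpolation, and your Step 3 is verbatim the paper's derivation of \eqref{estt.6}. The only point worth flagging is that the lower bound $0\leq \umu$, which your Step 1 uses, is itself part of the statement and should be credited to the comparison principle (Remark \ref{rem1}), exactly as the paper does.
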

\begin{proof}
Estimates \eqref{est.infty} and \eqref{est.5} follow as in \cite{MR1233647}, so we omit to prove them. In fact the lower bound in \eqref{est.infty} emanates from Remark \ref{rem1}.   Estimate \eqref{estt.6} follows from \eqref{one-side} by using that $1< q<2$ and estimate \eqref{est.infty}:
$$\umu_{ x}(t, x)\leq \frac{(\umu)^{2-q}(t,x)}{(q-1)t}\leq
C(M, q)t^{-2/q}, \quad \forall t>0,  \textrm{ a.e. } \ x\in \rr.$$
The proof is now finished.
\end{proof}

%
%

\section{Convergence to the nonregularized system}\label{secconv}

 The main goal of this section is to prove some compactness results
 for the rescaled solutions depending on a new parameter $\lambda$ that we will introduce below.
By means of these compactness results, through a convergence argument with respect to the parameters $\lambda$ and $\mu$,  we will be able to analyze in more details  the initial system
\begin{equation}\label{claws.1}
\left\{
\begin{array}{ll}
u_t+|u|^{q-1}u_x=J\ast u-u,& x\in \rr,t>0,\\[10pt]
u(0)=\varphi.
\end{array}
\right.
\end{equation}

First, since we are interested in the asymptotic behavior of the solutions of   system \eqref{claws.1} we introduce the rescaled solutions $u_\lambda(t,x)=\lambda u(\lambda^q t, \lambda x)$, with $\lambda>0$. The new family satisfies the system
\begin{equation}\label{claws.2}
\left\{
\begin{array}{ll}
u_{\la,t}+|u_\la|^{q-1}(u_\la)_x=\la^{q} (J_\la\ast u_\la -u_\la),\, & x\in \rr,\ t>0,\\[10pt]
u_\la(0)=\varphi_\la,
\end{array}
\right.
\end{equation}
where $J_\la$ is defined by $J_\la(x)=\la J(\la x)$ and $\varphi_\la(x)=\la \varphi(\la x)$.

Observe that this is a hyperbolic scaling in contrast with the parabolic one  used in \cite{MR3190994, Ignat:2013fk}. This is due to the fact that we analyze the sublinear case $1<q<2$ where the convection is dominant.

The family $\{u_\la\}$ will play a key role in Section \ref{asybehavior} when analyzing the long time behavior of the solution of system \eqref{claws.1}. In order to obtain  estimates for $u_\la $ we introduce a viscous approximation $\umu_\la$ of system \eqref{claws.2} by adding a $\mu$-viscous term. Then we  prove that the approximate solution $u_\la^\mu$ converges to $u_\la$ as $\mu$ tends to zero,  and then transfer all estimates verified for $\umu_\la$ to $u_\la$.
In the particular case $\lambda=1$ we extend in Theorem \ref{global.convergence} the results of
\cite[Theorem 2.4]{1052.35126}. The proof of Theorem \ref{global.convergence} will be given at the end of this section.





For each $\mu, \la>0$ we consider $\umu_\la$ the solution of the following system
\begin{equation}\label{eq.u.la}
\left\{
\begin{array}{ll}
\umu_{\la,t}+|\umu_\la|^{q-1}(\umu_\la)_x=\la^{q} (J_\la\ast \umu_\la -\umu_\la) +\mu (\umu_\la)_{xx},& x\in \rr,\ t>0,\\[10pt]
\umu_\la(0)=\varphi_\la.
\end{array}
\right.
\end{equation}

All the estimates obtained in Section \ref{reg} hold for $\umu_\lambda $  uniformly on $\lambda$ and $\mu$ since $J_\lambda$ has mass one and the results in Lemma \ref{linfty} do not depend on the parameter $\alpha$ in \eqref{umu.1}. The mass conservation \eqref{conservation.property} also holds.

We now transfer estimates \eqref{est.infty}-\eqref{est.5} obtained in Lemma \ref{linfty} to changing sign solutions of system \eqref{eq.u.la} by comparing them with some nonnegative solutions for which such estimates hold true.

\begin{lemma}
\label{est.sol.negativas}
	For any $\varphi\in  L^1(\rr)$ the solution $\umu_\la$ of equation \eqref{eq.u.la} satisfies
	\begin{equation}
\label{est.neg.1}
   |\umu_\lambda (t,x)|\leq \Big(\frac {q\|\varphi\|_{L^1(\rr)}}{(q-1)t}\Big)^{1/q}, \quad \forall t>0, \quad \textrm{ a.e. } x\in \rr,
\end{equation}

\begin{equation}\label{est.5.2}
\|\umu_\la(t)\|_{L^p(\rr)}\leq C(\|\varphi\|_{L^1(\rr)},p, q)t^{-\frac {1}q (1-\frac 1p)}, \quad \forall 1\leq p\leq \infty, \quad \forall t>0.
\end{equation}

\end{lemma}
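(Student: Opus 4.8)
The plan is to obtain the bounds \eqref{est.neg.1} and \eqref{est.5.2} for the changing sign solution $\umu_\la$ by a comparison argument, reducing everything to the nonnegative estimates \eqref{est.infty}--\eqref{est.5} already established in Lemma \ref{linfty} (which, as remarked right before this lemma, hold for $\umu_\la$ uniformly in $\la$ and $\mu$ when the datum is nonnegative, since $J_\la$ has mass one and the constants do not depend on $\alpha$). The idea is simple: split $\varphi_\la = \varphi_\la^+ - \varphi_\la^-$, and let $v$ be the solution of \eqref{eq.u.la} with nonnegative datum $|\varphi_\la| = \varphi_\la^+ + \varphi_\la^-$. I would like to conclude $|\umu_\la(t,x)| \le v(t,x)$ pointwise, and then apply \eqref{est.infty} (resp. \eqref{est.5}) to $v$, noting $\||\varphi_\la|\|_{L^1} = \|\varphi_\la\|_{L^1} = \|\varphi\|_{L^1}$ by the scaling.

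First I would set up the comparison rigorously. By the Comparison Principle (Proposition \ref{max.principle}, Remark \ref{rem1}) applied to system \eqref{eq.u.la} — which has the same structure as \eqref{umu.1} with $\alpha = \la^q$ and kernel $J_\la$, so all the well-posedness and comparison results of Section \ref{reg} apply verbatim — from $-|\varphi_\la| \le \varphi_\la \le |\varphi_\la|$ we would like to deduce $-v \le \umu_\la \le v$. The subtlety is that the comparison principle as stated in Remark \ref{rem1} compares solutions with ordered data, and here $v$ is the solution with datum $|\varphi_\la|$ while $-v$ is \emph{not} a priori the solution with datum $-|\varphi_\la|$. However, the equation \eqref{eq.u.la} is odd: if $u$ solves it with datum $\psi$, then $-u$ solves it with datum $-\psi$ (the nonlinearity $|u|^{q-1}u$, the linear nonlocal term, and $\mu u_{xx}$ are all odd). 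Hence the solution with datum $-|\varphi_\la|$ is exactly $-v$, and Remark \ref{rem1} gives $-v(t,x) \le \umu_\la(t,x) \le v(t,x)$ for a.e. $x$, all $t>0$.

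With the pointwise bound $|\umu_\la(t,x)| \le v(t,x)$ in hand, the rest is immediate. Applying \eqref{est.infty} to $v$ (whose datum $|\varphi_\la|$ is nonnegative with $L^1$-norm $\|\varphi\|_{L^1(\rr)}$) yields
\[
|\umu_\la(t,x)| \le v(t,x) \le \Big(\frac{q\|\varphi\|_{L^1(\rr)}}{(q-1)t}\Big)^{1/q},
\]
which is \eqref{est.neg.1}. Likewise, $\|\umu_\la(t)\|_{L^p(\rr)} \le \|v(t)\|_{L^p(\rr)} \le C(\|\varphi\|_{L^1(\rr)},p,q)\, t^{-\frac1q(1-\frac1p)}$ from \eqref{est.5} for $1\le p<\infty$, and the case $p=\infty$ is precisely \eqref{est.neg.1}; this gives \eqref{est.5.2}. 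I expect the only real point requiring care to be the odd-symmetry observation that legitimizes comparing with $-v$; everything else is a direct transfer of Lemma \ref{linfty}. One should also note that the constants inherited this way are independent of $\la$ and $\mu$ (and of $J$), exactly as in Lemma \ref{linfty}, since the bounds for $v$ are.
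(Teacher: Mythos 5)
Your proposal is correct and follows essentially the same route as the paper: compare $\umu_\la$ with the solution $\tilde{u}_\la^\mu$ of \eqref{eq.u.la} having nonnegative datum $|\varphi_\la|$, deduce $|\umu_\la|\le \tilde{u}_\la^\mu$ from Proposition \ref{max.principle}/Remark \ref{rem1}, and transfer the bounds \eqref{est.infty}--\eqref{est.5} of Lemma \ref{linfty}. Your explicit check that the equation is odd (so the solution with datum $-|\varphi_\la|$ is $-\tilde{u}_\la^\mu$, which justifies the lower comparison) is a point the paper leaves implicit, and it is a correct and welcome clarification rather than a deviation.
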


\begin{proof}Let us consider $\tilde{u}_\lambda^\mu$ to be the solution of \eqref{eq.u.la} with the nonnegative initial data $\tilde \varphi_\la=|\varphi_\la|$.
Using the comparison principle in  Proposition \ref{max.principle} (and more precisely Remark \ref{rem1}) we get
\begin{equation}\label{comp_principle}
|\umu_\la|\leq \tumu.
\end{equation}
Since $\tumu$ is nonnegative the estimates in Lemma \ref{linfty}
hold for $\tumu$.	 Hence estimates \eqref{est.infty} and \eqref{est.5}  transfer to the function $\umu_\lambda$. These complete the proof.
\end{proof}

The following results will be used when considering the long time behavior of changing sign solutions.
\begin{lemma}
\label{est.gradient}
For any  $\varphi\in  L^1(\rr)$ and  $0<t_1<t_2<\infty$ we have
\begin{equation}\label{gradient.est}
  \lambda^q\int_{t_1}^{t_2}\int_\rr \int _\rr J_\lambda(x-y)(\umu_\lambda(t,x)-\umu_\lambda(t,y))^2dxdydt\leq C(t_1,\|\varphi\|_{L^1(\rr)}).
\end{equation}
\end{lemma}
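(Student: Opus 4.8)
The plan is to derive the estimate \eqref{gradient.est} directly from an energy identity for the regularized equation \eqref{eq.u.la}. First I would multiply \eqref{eq.u.la} by $\umu_\lambda$ and integrate over $\rr$. The parabolic term contributes $-\mu\int_\rr ((\umu_\lambda)_x)^2\,dx\leq 0$, which is harmless (it has a good sign). The convection term $|\umu_\lambda|^{q-1}(\umu_\lambda)_x\,\umu_\lambda$ integrates to zero because it is a perfect $x$-derivative, namely $\partial_x\big(\int_0^{\umu_\lambda}|s|^{q-1}s\,ds\big) = \partial_x\big(|\umu_\lambda|^{q+1}/(q+1)\big)$, and $\umu_\lambda(t,\cdot)\in W^{2,p}(\rr)\cap L^1(\rr)$ decays at infinity. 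For the nonlocal term I would use the standard symmetrization identity: since $J_\lambda$ is even and has mass one,
\[
\int_\rr (J_\lambda\ast \umu_\lambda - \umu_\lambda)(x)\,\umu_\lambda(x)\,dx
= -\frac12\int_\rr\int_\rr J_\lambda(x-y)\big(\umu_\lambda(t,x)-\umu_\lambda(t,y)\big)^2\,dx\,dy.
\]
Putting these together yields
\[
\frac{d}{dt}\,\frac12\|\umu_\lambda(t)\|_{L^2(\rr)}^2
+ \mu\int_\rr \big((\umu_\lambda)_x\big)^2\,dx
+ \frac{\lambda^q}{2}\int_\rr\int_\rr J_\lambda(x-y)\big(\umu_\lambda(t,x)-\umu_\lambda(t,y)\big)^2\,dx\,dy
= 0.
\]

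Next I would integrate this identity in time over $(t_1,t_2)$, discard the nonnegative viscous term, and bound the boundary contribution:
\[
\frac{\lambda^q}{2}\int_{t_1}^{t_2}\int_\rr\int_\rr J_\lambda(x-y)\big(\umu_\lambda(t,x)-\umu_\lambda(t,y)\big)^2\,dx\,dy\,dt
\leq \frac12\|\umu_\lambda(t_1)\|_{L^2(\rr)}^2.
\]
The right-hand side is controlled uniformly in $\mu$ and $\lambda$ by the decay estimate \eqref{est.5.2} of Lemma \ref{est.sol.negativas} applied with $p=2$: $\|\umu_\lambda(t_1)\|_{L^2(\rr)}^2 \leq C(\|\varphi\|_{L^1(\rr)},q)\,t_1^{-1}$, which gives the claimed bound with $C(t_1,\|\varphi\|_{L^1(\rr)})$. (One only needs $L^2$-boundedness at the single time $t_1$, which is why the estimate degenerates as $t_1\downarrow 0$.)

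The only genuinely delicate point is justifying the energy identity rigorously, i.e. that all the integrations by parts are legitimate and there are no boundary terms at $x=\pm\infty$. This is where the regularity $\umu_\lambda\in C((0,\infty),W^{2,p}(\rr))\cap C^1((0,\infty),L^p(\rr))$ for all $1<p<\infty$, recalled after \eqref{umu.1} (and valid here since $J_\lambda$ has mass one), does the work: for $t>0$ the solution lies in $L^1\cap L^\infty$ with two derivatives in every $L^p$, so $\umu_\lambda(t,x)\to 0$ as $|x|\to\infty$ and the convection and viscosity terms integrate by parts with no boundary contribution; the map $t\mapsto\|\umu_\lambda(t)\|_{L^2}^2$ is $C^1$ on $(0,\infty)$. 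The Fubini/symmetrization step for the nonlocal term is justified because $J_\lambda\in L^1(\rr)$ and $\umu_\lambda(t,\cdot)\in L^2(\rr)$, so $(x,y)\mapsto J_\lambda(x-y)(\umu_\lambda(t,x)-\umu_\lambda(t,y))^2$ is integrable on $\rr^2$ for each $t>0$. With these points settled, integrating the identity over $(t_1,t_2)$ and invoking Lemma \ref{est.sol.negativas} completes the proof.
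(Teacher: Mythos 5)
Your proof is correct and is essentially the paper's own argument: multiply \eqref{eq.u.la} by $\umu_\lambda$, integrate over $(t_1,t_2)\times\rr$, symmetrize the nonlocal term, drop the nonnegative viscous and nonlocal contributions at the level of the identity, and bound $\|\umu_\lambda(t_1)\|_{L^2(\rr)}^2$ via Lemma \ref{est.sol.negativas}. The only slip is the exponent in that last bound, which should read $\|\umu_\lambda(t_1)\|_{L^2(\rr)}^2\leq C(\|\varphi\|_{L^1(\rr)},q)\,t_1^{-1/q}$ rather than $t_1^{-1}$; this does not affect the conclusion.
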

\begin{proof}
	We multiply equation \eqref{eq.u.la} by $\umu_\lambda$ and integrate on $(t_1,t_2)\times \rr$. We obtain
	\begin{align*}
  \|\umu_\lambda(t_2)\|_{L^2(\rr)}^2&+{\lambda^q }\int_{t_1}^{t_2}\int_\rr \int _\rr J_\lambda(x-y)(\umu_\lambda(t,x)-\umu_\lambda(t,y))^2dxdydt +2\mu\int_{t_1}^{t_2}\int_{\rr } (\umu_{\lambda,x})^2dxdt \\
  &=  \|\umu_\lambda(t_1)\|_{L^2(\rr)}^2.
\end{align*}
Using the decay of the $L^2$-norm obtained in  Lemma \ref{est.sol.negativas} we obtain the desired result.
\end{proof}

We now uniformly control the tails of the solutions to system \eqref{eq.u.la}.

\begin{lemma}\label{tails.control} Let us assume that $J\in L^1(\rr, 1+|x|^2)$ and let $\varphi\in L^1(\rr)$. For any parameters $\lambda\geq 1$ and $0<\mu\leq 1$, there exists a positive constant $C=C(\|\varphi\|_{L^1(\rr)}, J, q)$ such that the solution of system \eqref{eq.u.la} satisfies the following estimate
\begin{equation}\label{int.2r}
\int _{|x|>2R} |\umu_\la (t,x)|dx\leq \int _{|x|>R}|\varphi(x)|dx+C\left(\frac{t}{R^2}+\frac {t^{1/q}}{R}\right)
\end{equation}
 for any $t>0$,  $R>0$.
\end{lemma}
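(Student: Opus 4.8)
The plan is to derive the tail estimate \eqref{int.2r} by testing the equation \eqref{eq.u.la} against a suitable cutoff function and carefully tracking each contribution, in particular the nonlocal term. First I would fix $R>0$ and pick a smooth cutoff $\psi_R(x)=\psi(x/R)$ with $0\le\psi\le 1$, $\psi\equiv 0$ on $|x|\le 1$, $\psi\equiv 1$ on $|x|\ge 2$, and $\|\psi'\|_\infty+\|\psi''\|_\infty\le C$; then $|\psi_R'|\le C/R$, $|\psi_R''|\le C/R^2$, and $\int_{|x|>2R}|u^\mu_\lambda(t,x)|dx\le\int_\rr|u^\mu_\lambda(t,x)|\psi_R(x)dx$. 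Since the solution is smooth and in $C([0,\infty),L^1(\rr))$, I would multiply \eqref{eq.u.la} by $\sgn(u^\mu_\lambda)\psi_R$ — more precisely work with a smooth approximation of $\sgn$ and pass to the limit to get the Kato-type inequality $\partial_t|u^\mu_\lambda|+(\sgn(u^\mu_\lambda)|u^\mu_\lambda|^{q-1}u^\mu_\lambda/q)_x\le \lambda^q\sgn(u^\mu_\lambda)(J_\lambda\ast u^\mu_\lambda-u^\mu_\lambda)+\mu|u^\mu_\lambda|_{xx}$ in the distributional sense — and integrate against $\psi_R$ over $\rr$, then over $(0,t)$. This produces $\int_\rr|u^\mu_\lambda(t)|\psi_R\,dx\le\int_\rr|\varphi_\lambda|\psi_R\,dx$ plus three error terms.

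The three error terms are: the convective term $-\int_0^t\int_\rr \sgn(u^\mu_\lambda)\frac{|u^\mu_\lambda|^{q-1}u^\mu_\lambda}{q}\psi_R'\,dx\,ds$, the viscous term $\mu\int_0^t\int_\rr|u^\mu_\lambda|\psi_R''\,dx\,ds$, and the nonlocal term $\lambda^q\int_0^t\int_\rr\sgn(u^\mu_\lambda)(J_\lambda\ast u^\mu_\lambda-u^\mu_\lambda)\psi_R\,dx\,ds$. For the convective term I would use $|\psi_R'|\le C/R$ together with the $L^q$-bound $\||u^\mu_\lambda(s)|^{q}\|_{L^1}=\|u^\mu_\lambda(s)\|_{L^q}^q\le C(\|\varphi\|_{L^1})^q s^{-(q-1)}$ from \eqref{est.5.2}, and since $\int_0^t s^{-(q-1)}ds\le C(q)t^{2-q}$ for $1<q<2$... actually it is cleaner to bound $\||u^\mu_\lambda|^{q-1}u^\mu_\lambda/q\|_{L^1}\le C\|u^\mu_\lambda\|_{L^\infty}^{q-1}\|u^\mu_\lambda\|_{L^1}\le C(\|\varphi\|_{L^1})s^{-(q-1)/q}$ using \eqref{est.neg.1} and mass conservation, giving $\frac{C}{R}\int_0^t s^{-(q-1)/q}ds=\frac{C}{R}t^{1/q}$, which is exactly the second term in \eqref{int.2r}. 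For the viscous term, $\mu\le 1$ and $\|u^\mu_\lambda(s)\|_{L^1}\le\|\varphi\|_{L^1}$ give a bound $\frac{C}{R^2}\int_0^t\|\varphi\|_{L^1}ds=C\|\varphi\|_{L^1}\frac{t}{R^2}$, the first term. Finally, note $\int_\rr|\varphi_\lambda|\psi_R\,dx=\int_{\rr}|\varphi(y)|\psi_R(y/\lambda)\,dy\le\int_{|y|>R}|\varphi(y)|\,dy$ since $\lambda\ge 1$ forces $\psi_R(y/\lambda)$ to vanish unless $|y|\ge R$; wait, more carefully $\psi_R(x)=0$ for $|x|<R$, and after scaling $\varphi_\lambda(x)=\lambda\varphi(\lambda x)$ the integral becomes $\int|\varphi(z)|\psi_R(z/\lambda)dz$, and $\psi_R(z/\lambda)\ne 0$ requires $|z/\lambda|>R$, i.e. $|z|>\lambda R\ge R$, so indeed this is $\le\int_{|x|>R}|\varphi|$.

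The main obstacle is the nonlocal term $\lambda^q\int_0^t\int_\rr\sgn(u^\mu_\lambda)(J_\lambda\ast u^\mu_\lambda-u^\mu_\lambda)\psi_R\,dx\,ds$, which must be shown to contribute at most $C(t/R^2+t^{1/q}/R)$; this is where the hypothesis $J\in L^1(\rr,1+|x|^2)$ enters. The key identity is that by symmetry of $J_\lambda$,
\begin{align*}
\int_\rr\sgn(u^\mu_\lambda)(x)\,(J_\lambda\ast u^\mu_\lambda-u^\mu_\lambda)(x)\,\psi_R(x)\,dx
&=\int_\rr\int_\rr J_\lambda(x-y)\big(u^\mu_\lambda(y)-u^\mu_\lambda(x)\big)\sgn(u^\mu_\lambda(x))\psi_R(x)\,dy\,dx\\
&\le\int_\rr\int_\rr J_\lambda(x-y)\big(|u^\mu_\lambda(y)|-|u^\mu_\lambda(x)|\big)\psi_R(x)\,dy\,dx,
\end{align*}
and then symmetrizing in $x,y$ gives $\tfrac12\int\int J_\lambda(x-y)(|u^\mu_\lambda(y)|-|u^\mu_\lambda(x)|)(\psi_R(x)-\psi_R(y))\,dy\,dx$. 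Now $|\psi_R(x)-\psi_R(y)|\le\frac{C}{R}|x-y|$ and also $\le\frac{C}{R^2}|x-y|$ times... — the right move is to use $|\psi_R(x)-\psi_R(y)|\le\min\{2,\frac{C}{R}|x-y|\}$ and a second-order Taylor bound to extract $\frac{C}{R^2}|x-y|^2$ on the region where the cutoff is "active", then pair $|u^\mu_\lambda(y)|-|u^\mu_\lambda(x)|$ with one factor of $|x-y|$ absorbed into a gradient-type quantity. Concretely, bounding $\lambda^q J_\lambda(x-y)|x-y|^k=\lambda^{q}\lambda J(\lambda(x-y))|x-y|^k=\lambda^{q-k}\cdot\lambda J(\lambda(x-y))|\lambda(x-y)|^k$, so $\lambda^q\int\int J_\lambda(x-y)|x-y|^2\,|u^\mu_\lambda(x)|\,dy\,dx\le\lambda^{q-2}\big(\int|z|^2J(z)dz\big)\|u^\mu_\lambda\|_{L^1}\le C(J)\|\varphi\|_{L^1}$ since $\lambda\ge 1$ and $q<2$; combined with the $\frac{1}{R^2}$ from the cutoff this yields the $t/R^2$ contribution after integrating in $s$. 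The piece needing the first-order bound $\frac{1}{R}|x-y|$ is handled similarly, $\lambda^q\int\int J_\lambda(x-y)|x-y|\,|u^\mu_\lambda(x)|\,dy\,dx\le\lambda^{q-1}(\int|z|J(z)dz)\|u^\mu_\lambda\|_{L^1}$, but here $\lambda^{q-1}$ is not bounded; the resolution is that one uses the first-order bound only where $|x-y|\le R$ is comparable to where $\psi$ varies and pairs it against the $L^\infty$-decay/gradient estimate, or more simply one splits $J_\lambda$ into its bulk (controlled by the $L^1$ mass, giving the $\psi_R(x)-\psi_R(y)$ cancellation an extra factor) and a far tail $\int_{|z|>\lambda R}J\,dz\le R^{-2}\lambda^{-2}\int|z|^2 J$ which kills the bad power of $\lambda$. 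I would carry out this splitting carefully so that every surviving power of $\lambda$ is $\le 0$, arriving at the stated bound; the bookkeeping of which region uses which order of Taylor expansion is the delicate point, but the hypothesis $J\in L^1(\rr,1+|x|^2)$ is precisely what makes it work.
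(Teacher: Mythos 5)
Your outer structure (test with the rescaled cutoff $\psi_R$, handle the sign either by a Kato inequality or, as the paper does, by comparison with the solution emanating from $|\varphi_\la|$, bound the convective term via the $L^q$ decay to get $t^{1/q}/R$, the viscous term via mass conservation and $\mu\le 1$ to get $t/R^2$, and the initial-data term via the scaling $|z|>\la R\ge R$) matches the paper's proof. The genuine gap is in the nonlocal term, which is the whole point of the lemma. After writing $\sgn(u^\mu_\la)(J_\la\ast u^\mu_\la-u^\mu_\la)\le J_\la\ast|u^\mu_\la|-|u^\mu_\la|$, the correct (and much simpler) move is to transfer the operator onto the test function, $\int |u^\mu_\la|\,(J_\la\ast\psi_R-\psi_R)\,dx$, and invoke the uniform second-order estimate \eqref{bound.second.order}, $\|\la^2(J_\la\ast\psi-\psi)\|_{L^\infty}\le C(J)\|\psi_{xx}\|_{L^\infty}$ (from \cite[Lemma 2.2]{Ignat:2013fk}); its proof rests on the evenness of $J$, which kills the first-order Taylor term, together with the finite second moment. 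This is exactly what produces the harmless factor $\la^{q-2}\le 1$ and the $t/R^2$ contribution. You never state or use this cancellation-by-symmetry estimate.

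Instead you symmetrize in $x,y$ and then propose pointwise Taylor bounds $|\psi_R(x)-\psi_R(y)|\le\min\{2,\,C|x-y|/R\}$ combined with a bulk/tail splitting of $J_\la$. As described, this does not close: to pair against $\|u^\mu_\la\|_{L^1}$ you must bound $\bigl||u^\mu_\la(y)|-|u^\mu_\la(x)|\bigr|$ by the sum, and then the bulk of $J_\la$ (which lives at scale $|x-y|\sim 1/\la$) already contributes
\[
\la^q\int\!\!\int J_\la(x-y)\,\frac{|x-y|}{R}\,|u^\mu_\la(x)|\,dy\,dx\;\sim\;\frac{\la^{q-1}}{R}\Bigl(\int|z|J(z)\,dz\Bigr)\|\varphi\|_{L^1(\rr)},
\]
and $\la^{q-1}\to\infty$ for $q>1$; the far-tail bound $\int_{|z|>\la R}J\le(\la R)^{-2}\int|z|^2J$ controls only the tail, not this bulk term, so your claim that the splitting ``kills the bad power of $\la$'' fails. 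Your alternative suggestion of pairing the difference quotient of $u^\mu_\la$ with a gradient-type quantity would mean Cauchy--Schwarz against the energy estimate \eqref{gradient.est}, but that estimate is stated on $(t_1,t_2)$ with $t_1>0$ while your time integral starts at $0$, and in any case you do not carry it out. In short, the delicate bookkeeping you defer is precisely where the argument breaks, and the missing ingredient is the symmetry-based second-order estimate on $J_\la\ast\psi_R-\psi_R$ used in the paper.
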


\begin{proof} Applying the comparison principle in Proposition  \ref{max.principle} we remark that it is sufficient to consider the case of nonnegative solutions. Indeed, choosing as initial data $\tilde \varphi_\la =|\varphi_\la|$ we obtain that $|\umu_{\la}(t,x)|\leq \tilde{u}_{\la}^{\mu}(t,x)$. We then use estimate \eqref{int.2r} for $\tilde{u}_{\la}^{\mu}$ and the proof finishes.

We now prove \eqref{int.2r} for nonnegative solutions $\tilde{u}_{\la}^{\mu}$.
Let us choose  a nonnegative test function $\psi\in C^2(\rr)$.
 We multiply equation \eqref{eq.u.la} by $\psi$ and integrate by parts:
\begin{multline}\label{est.6}
\int_{\rr} \tilde{u}_{\la}^{\mu}(t,x) \psi(x)dx-\int _{\rr}|\varphi_\la(x)|\psi(x)dx\\
=\la^q \int_0^t \int_{\rr} \tilde{u}_{\la}^{\mu}(s, x)(J_\la \ast \psi-\psi)(x) dxds +\int_0^t \int_{\rr} \frac{(\tilde{u}_{\la}^{\mu}(s, x))^q}{q}\psi_x dxds\\
+ \mu \int_0^t \int_{\rr} \tilde{u}_{\la}^{\mu}(s, x) \psi_{xx}(x) dx ds.
\end{multline}
We now recall that (see for example  \cite[Lemma 2.2]{Ignat:2013fk}) for any  $J\in L^1(1+|x|^2)$ with mass one and any $p\in [1, \infty]$
there exists $C(J,p)>0$  such that the following inequality holds for any $\lambda > 0$:
\begin{equation}
\label{bound.second.order}
   \|\la^2 (J_\la \ast \psi - \psi)\|_{L^p(\rr)} \leq C(J,p) \|\psi_{xx}\|_{L^p(\rr)}.
\end{equation}
 This and the mass conservation property \eqref{conservation.property} imply that
 \begin{align*}
  \la^q \int_0^t \int_{\rr} |\tilde{u}_{\la}^{\mu}(s, x)(J_\la \ast \psi-\psi)(x)| dxds
  & \leq \lambda^{q-2}C(J) t \|\psi_{xx}\|_{L^\infty(\rr)}\|\varphi_\la\|_{L^1(\rr)}\\
  & = \lambda^{q-2}C(J) t \|\psi_{xx}\|_{L^\infty(\rr)}\|\varphi\|_{L^1(\rr)}.
\end{align*}

On the other hand, using estimate \eqref{est.5.2} for the solution of system \eqref{eq.u.la} we  successively have
\begin{align*}
\int_{0}^{t} \int_{\rr}|(\tilde{u}_{\la}^{\mu}(s, x))^q \psi_x | dx ds
&\leq \|\psi_x\|_{L^\infty(\rr)} \int_0^t \|\tilde{u}_{\la}^{\mu}(s)\|_{L^q(\rr)}^{q} ds \\
& \leq  C(\|\varphi_\la\|_{L^1(\rr)}, q) \|\psi_x\|_{L^\infty(\rr)} \int_0^t s^{-1+1/q} ds\\
& = C(\|\varphi\|_{L^1(\rr)}, q) \|\psi_x\|_{L^\infty(\rr)}  t^{1/q}.
\end{align*}

Using the fact that $\tilde{u}_{\la}^{\mu}$ has the same mass as the initial data we get
\begin{align*}
\mu \int_0^t  \int_{\rr} |\tilde{u}_{\la}^{\mu} \psi_{xx}(x) |dx ds & \leq  \mu  \|\psi_{xx}\|_{L^\infty(\rr)} \int_0^t \int_{\rr} \tilde{u}_{\la}^{\mu}(s, y) dy ds \\
&  =   \mu \|\psi_{xx}\|_{L^\infty(\rr)} \|\varphi\|_{L^1(\rr)}  t.
\end{align*}
Combining the estimates  above and using that $\mu\leq 1\leq \lambda$  it follows from \eqref{est.6} that
\begin{align*}
\int _{\rr}\tilde{u}_{\la}^{\mu}(t,x) \psi(x)dx\leq & \int _{\rr}|\varphi_\la(x)|\psi(x)dx\\
&+C(\|\varphi\|_{L^1(\rr)},J,q)  \left(   t \|\psi_{xx}\|_{L^\infty(\rr)}  +  t^{1/q}\|\psi_{x}\|_{L^\infty(\rr)} \right).
\end{align*}

In particular, let us choose as test function $\psi_R(x)=\psi(x/R)$ where $\psi\in C^\infty(\rr)$ is a fixed function that satisfies $0\leq \psi \leq 1$ such that  $\psi(x)\equiv 0$ for $|x|<1$ and $\psi(x)\equiv 1$ for $|x|>2$. We apply the above estimate with $\psi_R$.  Using the properties of the support of $\psi$  we get
\begin{align*}
\int_{|x|>2R} \tilde{u}_{\la}^{\mu} (t, x) dx & \leq  \int_{\rr} \tilde{u}_{\la}^{\mu}(t, x) \psi_R(x) dx \\
  &\leq  \int_{\rr} |\varphi_\la (x)| \psi_R (x)dx +C\l( \frac{t}{R^2} \|\psi_{xx}\|_{L^\infty(\rr)}+ \frac{t^{1/q}}{R} \|\psi_x\|_{L^\infty(\rr)}\r)\\
& \leq  \int_{|x|\geq R} |\varphi_\la(x)| dx +C_1(\psi, \|\varphi\|_{L^1(\rr)}, q, J)\l(\frac{t}{R^2} + \frac{t^{1/q}}{R}\r) \\
& =\int_{|x|> \la R} |\varphi(x)| dx + C_1(\psi, \|\varphi\|_{L^1(\rr)}, q, J)\l(\frac{t}{R^2} + \frac{t^{1/q}}{R}\r).
\end{align*}
Since $\la\geq 1$ the last estimate shows that \eqref{int.2r} holds for the nonnegative solution $\tilde{u}_{\la}^{\mu}$ and in consequence it also holds for the changing sign solution $u_{\la}^{\mu}$. The proof is now complete.
\end{proof}

\subsection{Compactness estimates}\label{changesign}
We will now give  very useful estimates in Lemmas \ref{comp.changing.sign} and \ref{comp.changing.sign2} below. The results of these lemmas are in the spirit of the results in \cite{1052.35126}
and \cite{1070.35065}. We emphasize that the
estimates in Lemma \ref{comp.changing.sign} are valid for any $\varphi\in L^1(\rr)$,  but they are not uniform with respect to $\lambda$. On the other hand the estimates in Lemma \ref{comp.changing.sign2} are uniform with respect to the parameter $\lambda$,  but they require nonnegative solutions, so nonnegative initial data $\varphi$.


\begin{lemma}\label{comp.changing.sign}
Let $\varphi\in L^1(\rr)$. There exists a nondecreasing function $\omega_\la=\omega_{\lambda,\varphi}:[0,\infty)\rightarrow [0,\infty)$ satisfying $\omega_\lambda(r)\rightarrow 0$ as $r\rightarrow 0$ such that   the solution of \eqref{eq.u.la} satisfies for all $t>0$ the following estimates
 \begin{equation}\label{shift.space}
 \int_{-\infty}^{\infty} |\umu_\la(t, x+h)-\umu_{\la}(t, x)|dx \leq \omega_\la(|h|), \quad \forall h\in \rr,
 \end{equation}
  \begin{multline}\label{shift.time}
  \int_{-\infty}^{\infty} |\umu_\la(t+k, x)-\umu_{\la}(t, x)|dx \\
  \leq C( q, J, \|\varphi\|_{L^1(\rr)})
 ( (\lambda^{q-2}+\mu)k^{1/3}+ k^{2/3}t^{-1+1/q}) + 4\omega_\la(k^{1/3}),
  \end{multline}
  for any $\la> 0$ and $\mu>0$, where $C(q, J, \|\varphi\|_{L^1(\rr)})$ is a positive constant depending on $q$, $J$ and $\varphi$.
 \end{lemma}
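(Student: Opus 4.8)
The plan is to establish both estimates by testing the equation \eqref{eq.u.la} against suitable functions and exploiting the estimates already at our disposal. For the space-translation bound \eqref{shift.space}, I would fix $h\in\rr$, set $v(t,x)=\umu_\la(t,x+h)$, and observe that $v$ solves the same equation \eqref{eq.u.la} (the operators $|\cdot|^{q-1}(\cdot)_x$, $J_\la\ast(\cdot)-(\cdot)$ and $\partial_{xx}$ all commute with translations) but with initial datum $\varphi_\la(\cdot+h)$. Applying the $L^1$-contraction property \eqref{mp2} of Proposition \ref{max.principle} gives
\[
\int_\rr |\umu_\la(t,x+h)-\umu_\la(t,x)|\,dx \le \int_\rr |\varphi_\la(x+h)-\varphi_\la(x)|\,dx =: \omega_\la(|h|).
\]
One then defines $\omega_\la(r):=\sup_{|h|\le r}\|\varphi_\la(\cdot+h)-\varphi_\la\|_{L^1(\rr)}$, which is nondecreasing and tends to $0$ as $r\to 0$ by continuity of translations in $L^1$; this is where the non-uniformity in $\lambda$ enters, since $\omega_\la$ depends on the modulus of continuity of $\varphi_\la$.

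For the time-translation estimate \eqref{shift.time}, the standard device (as in \cite{1052.35126,1070.35065}) is to split $\umu_\la(t+k,\cdot)-\umu_\la(t,\cdot)$ using a mollification in space: pick a smooth nonnegative kernel $\rho_\delta$ with $\int\rho_\delta=1$ supported in $(-\delta,\delta)$, and write
\[
\umu_\la(t+k)-\umu_\la(t) = \big(\umu_\la(t+k)-\umu_\la(t+k)\ast\rho_\delta\big) + \big(\umu_\la(t+k)\ast\rho_\delta-\umu_\la(t)\ast\rho_\delta\big) + \big(\umu_\la(t)\ast\rho_\delta-\umu_\la(t)\big).
\]
The first and third terms are controlled in $L^1$ by $\omega_\la(\delta)$ using \eqref{shift.space} (integrating $\rho_\delta$ against the translation estimate). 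For the middle term, I would integrate equation \eqref{eq.u.la} in time from $t$ to $t+k$ against $\rho_\delta(x-\cdot)$, so that
\[
\big(\umu_\la(t+k)\ast\rho_\delta-\umu_\la(t)\ast\rho_\delta\big)(x)=\int_t^{t+k}\!\!\int_\rr \Big(\tfrac{(\umu_\la)^q\!}{q}\,(\rho_\delta)_x + \la^q(\umu_\la)\,(J_\la\ast\rho_\delta-\rho_\delta) + \mu\,\umu_\la(\rho_\delta)_{xx}\Big)(x-y,s)\,dy\,ds,
\]
and bound the $L^1_x$-norm of each piece: the convective term by $\|(\rho_\delta)_x\|_{L^1}\int_t^{t+k}\|\umu_\la(s)\|_{L^q}^q\,ds\lesssim \delta^{-1}\int_t^{t+k}s^{-1+1/q}ds\lesssim \delta^{-1}k\,t^{-1+1/q}$ via \eqref{est.5.2}; the nonlocal term by $\la^q\|J_\la\ast\rho_\delta-\rho_\delta\|_{L^1}\cdot k\cdot\|\varphi\|_{L^1}\lesssim \la^{q-2}\|(\rho_\delta)_{xx}\|_{L^1}k\|\varphi\|_{L^1}\lesssim \la^{q-2}\delta^{-2}k$ using \eqref{bound.second.order} and mass conservation; and the viscous term by $\mu\|(\rho_\delta)_{xx}\|_{L^1}k\|\varphi\|_{L^1}\lesssim \mu\delta^{-2}k$. (Here using $\|(\rho_\delta)_x\|_{L^1}\lesssim\delta^{-1}$ and $\|(\rho_\delta)_{xx}\|_{L^1}\lesssim\delta^{-2}$.) Collecting,
\[
\int_\rr|\umu_\la(t+k,x)-\umu_\la(t,x)|\,dx \lesssim 2\omega_\la(\delta) + (\la^{q-2}+\mu)\frac{k}{\delta^2} + \frac{k}{\delta}\,t^{-1+1/q}.
\]

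Finally I would optimize in $\delta$. The cleanest choice matching the claimed exponents is $\delta=k^{1/3}$: then $k/\delta^2 = k^{1/3}$ and $k/\delta = k^{2/3}$, yielding exactly
\[
\int_\rr|\umu_\la(t+k,x)-\umu_\la(t,x)|\,dx \le C(q,J,\|\varphi\|_{L^1(\rr)})\big((\la^{q-2}+\mu)k^{1/3}+k^{2/3}t^{-1+1/q}\big)+4\omega_\la(k^{1/3}),
\]
the factor $4$ absorbing the two $\omega_\la(\delta)$ contributions together with a harmless constant. The main technical point to be careful about is the bound $\|J_\la\ast\rho_\delta-\rho_\delta\|_{L^1(\rr)}\le C(J)\la^{-2}\|(\rho_\delta)_{xx}\|_{L^1(\rr)}$, which is precisely the $p=1$ instance of \eqref{bound.second.order} and requires the hypothesis $J\in L^1(\rr,1+|x|^2)$; everything else is a routine assembly of the already-established decay estimate \eqref{est.5.2}, the mass conservation \eqref{conservation.property}, and the contraction property \eqref{mp2}. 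I do not expect any genuinely hard obstacle here — the content is bookkeeping of the three error terms and the choice of mollification scale — but one must track that all constants depend only on $q$, $J$ and $\|\varphi\|_{L^1(\rr)}$ and not on $\la$ or $\mu$, which is automatic from the estimates as stated.
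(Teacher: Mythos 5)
Your proposal is correct and follows essentially the paper's own route: the space estimate is exactly the $L^1$-contraction applied to the translated solution, and the time estimate is the same Kru\v{z}kov-type argument at mollification scale $k^{1/3}$, with identical bounds for the nonlocal term (via \eqref{bound.second.order}, giving $\lambda^{q-2}k^{1/3}$), the convective term (via the $L^q$ decay \eqref{est.5.2}, giving $k^{2/3}t^{-1+1/q}$) and the viscous term, the only organizational difference being that you mollify the solution while the paper tests with the mollified sign of the increment $\umu_\la(t+k)-\umu_\la(t)$ — by Fubini (and evenness of $\rho$) these are the same pairing, and in both cases the mollification error is absorbed into $4\omega_\la(k^{1/3})$. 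One small correction: for possibly sign-changing data the bound $\|\umu_\la(s)\|_{L^1(\rr)}\leq\|\varphi\|_{L^1(\rr)}$ used in the nonlocal and viscous terms comes from the $L^1$-contraction \eqref{mp2} of Proposition \ref{max.principle} with zero initial datum, not from the mass-conservation identity \eqref{conservation.property}, which only controls $\int \umu_\la\,dx$.
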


\begin{remark}
In contrast with the results in  \cite[Lemmas 2.3  and  3.12]{1052.35126}, here the constant $C( q, J, \|\varphi\|_{L^1(\rr)})$ in \eqref{shift.time} does not depend on the $L^\infty(\rr)-$ norm of the initial data $\varphi_\la$. This comes from the fact that in the proof we use the  $L^1(\rr)-L^\infty(\rr)$ estimates obtained  in Lemma  \ref{est.sol.negativas} for the solutions of \eqref{eq.u.la} instead of using that $\|\umu_{\la}(t)\|_{L^\infty(\rr)}\leq \|\varphi_\la\|_{L^\infty(\rr)}$.
\end{remark}

\begin{proof}
We recall  that for any function $\varphi\in L^1(\rr)$ there exists a nondecreasing function $\omega=\omega_\varphi: [0, \infty)\to [0, \infty)$, satisfying $\omega(r)\to 0$, as $r\to 0$,  the so-called $L^1$ modulus of continuity of $\varphi$,  such that
\[
  \int _{-\infty}^\infty |\varphi(x+h)-\varphi(x) |dx\leq \omega(|h|), \quad \forall \ h\in \rr.
\]
Let us observe that
\[
   \int _{-\infty}^\infty |\varphi_\la(x+h)-\varphi_\la(x) |dx= \int _{-\infty}^\infty |\varphi(x+\la h)-\varphi(x) |dx\leq \omega(\la |h|):=\omega_\lambda(|h|).
   \]

 For the justification of \eqref{shift.space} we apply the $L^1-$ contraction property established in Proposition \ref{max.principle} to get:
\begin{equation}\label{bounds.initial.data}
\|\umu_\la(t, \cdot)- \overline{u}_{\la}^{\mu}(t, \cdot)\|_{L^1(\rr)}\leq \|\varphi_\la-\overline{\varphi}_\la\|_{L^1(\rr)}, \quad \forall t>0,
\end{equation}
where $\overline{u}_{\la}^{\mu}$ is the solution of \eqref{eq.u.la} corresponding to some arbitrary initial datum $\overline{\varphi}_\la$. Taking $\overline{\varphi}_\la(x)=\varphi_\la(x+h)$ we obtain $\overline{u}_{\la}^{\mu}(t, x)=\umu_\la(t, x+h)$. Therefore, it follows that
$$\int_{-\infty}^{\infty} |\umu_\la(t, x+h)-\umu_\la(t, x)|dx \leq \int_{-\infty}^{\infty} |\varphi_\la(x+h)-\varphi_\la(x)|dx \leq \omega_\la(|h|). $$

  For the last statement of lemma, we proceed as in \cite[Lemma 2.3]{1052.35126},  but the estimate we derive takes into account the $L^1 \to L^\infty$ decay of solutions. Let $\phi$ be a smooth and bounded function which will be precise later. Then we multiply the equation in \eqref{eq.u.la} by $\phi$  and integrate by parts. For any fixed $k>0$ we obtain
\begin{align}\label{ident}
\int_{-\infty}^{\infty} & \phi(x) \left[(\umu_{\la}(t+k, x)-\umu_\la(t, x)\right] ds dx\nonumber \\
& = \la^q  \int_{-\infty}^{\infty} \int_{t}^{t+k} (J_\la \ast \umu_\la -\umu_\la )\phi ds dx +\frac{1}{q}\int_{-\infty}^{\infty} \int_{t}^{t+k} |\umu_\la|^{q-1}\umu_\la \phi_x ds dx\nonumber\\
& \quad + \mu  \int_{\rr}\int_{t}^{t+k} \umu_{\la}\phi_{xx}  ds  dx\nonumber\\
&:=I_1+I_2+\mu  I_3.
\end{align}

In the light of the computations in  \cite{1052.35126} we choose
$$\phi(x)= \int_{-\infty}^{\infty} k^{-1/3}\rho\left(\frac{x-\xi}{k^{1/3}}\right) \sgn\left(\umu_\la(t+k, \xi)-\umu_\la(t, \xi)\right)d\xi,$$
where $\rho$ is a nonnegative mollifier, supported in $[-1, 1]$  with mass one. It is easy to notice that
\begin{equation}\label{bounds.deriv}
|\phi|\leq 1, \quad |\phi_x|\leq C_1 k^{-1/3}, \quad |\phi_{xx}|\leq C_1 k^{-2/3},
\end{equation}
for some positive constant $C_1$.

Now, let us estimate the terms on the right hand side of \eqref{ident}.
We first estimate $I_1$.
Since $J_\la$ is an even function we have
$$I_1=\la^q \int_{t}^{t+k}  \int_{-\infty}^{\infty} \umu_\la(s, x) (J_\la \ast \phi -\phi)(x)  dx ds. $$
Using  estimate \eqref{bound.second.order} and
 the upper bound for the second derivative of $\phi$ in \eqref{bounds.deriv} we obtain
\[
  |I_1|\leq C(J) C_1 \la^{q-2} k^{-2/3}\int_{t}^{t+k} \int_{\rr} |\umu_\la(s, x)| dx ds.
\]
 Applying Proposition \ref{max.principle} the $L^1(\rr)$-norm of $\umu_\lambda$ does not increase, so we get
\begin{equation}
\label{first.term}
  |I_1| \leq C(J)C_1 \la^{q-2} k^{-2/3}\int_{t}^{t+k} \|\varphi_\la\|_{L^1(\rr)} ds
\leq  C(J)C_1 \la^{q-2} k^{1/3} \|\varphi\|_{L^1(\rr)}.
\end{equation}

In the case of $I_2$, from the upper bound of the first derivative of $\phi$ in \eqref{bounds.deriv} we obtain
\[
  |I_2|\leq \frac{1}{q} \int_{t}^{t+k} \int_{-\infty}^{\infty} |\umu_\la|^{q} |\phi_x| dx ds \leq C_1 k^{-1/3} \frac{1}{q} \int_{t}^{t+k} \|\umu_\la(s)\|_{L^q(\rr)}^{q} ds.
\]
Using Lemma \ref{est.sol.negativas}  for some positive constant $\tilde{C}(\|\varphi\|_{L^1(\rr)}, q)$ we have
\begin{equation}
  \label{ineq2}
|{I_2}| \leq \tilde{C}(\|\varphi\|_{L^1(\rr)}, q) k^{-1/3} \int_{t}^{t+k} s^{-1+1/q} ds \leq {C}(\|\varphi\|_{L^1(\rr)}, q) k^{2/3} t^{-1+1/q}.
\end{equation}

When considering $I_3$  it follows that
\begin{align}\label{ineq2.1}
|I_3| \leq \int_{\rr} \int_{t}^{t+k} &  |\umu_{\la}| |\phi_{xx}| ds dx  \leq C_1 k^{-2/3}  \int_{\rr} \int_{t}^{t+k} |\umu_\la| ds dx  \nonumber\\
&\leq C_1 k^{-2/3} \int_{t}^{t+k} \|\varphi_\la\|_{L^1(\rr)} ds =   C_1 k^{1/3} \|\varphi\|_{L^1(\rr)}.
\end{align}
From \eqref{first.term}, \eqref{ineq2} and \eqref{ineq2.1} we conclude that
\begin{equation}\label{equ1}
\int_{-\infty}^{\infty} \phi(x) |(\umu_{\la}(t+k, x)-\umu_\la(t, x)| ds dx \leq C( q, J, \|\varphi\|_{L^1(\rr)})
 ((\lambda^{q-2}+\mu)k^{1/3}+k^{2/3}t^{-1+1/q}).
\end{equation}
Now the proof  finishes in the same way  as in \cite[Lemma 2.3]{1052.35126}.
\end{proof}

\begin{lemma}\label{comp.changing.sign2}
Let $\varphi\in L^1(\rr)$ be nonnegative and $\umu_\lambda$ be the solution  of \eqref{eq.u.la}. There exists a smooth function $\omega=\omega_{\varphi, q}(t, r):(0,\infty)\times (0,1)\rightarrow (0,\infty)$ depending on $\varphi$ and $q$, which is nondecreasing in the second variable, such that, for any fixed $t>0$, $\omega(t, r)\rightarrow 0$ as $r\rightarrow 0$, and the following uniform estimates hold for any  parameters $0<\mu\leq 1\leq \lambda$:
 \begin{equation}\label{shift.space2}
 \int_{-\infty}^{\infty} |\umu_\la(t, x+h)-\umu_{\la}(t, x)|dx \leq \omega(t, |h|), \quad\quad  \forall\, |h|<1, \  \forall t>0.
 \end{equation}
 In addition, there exists a positive constant $C=C(\|\varphi\|_{L^1(\rr)}, q, J)$ such that
  \begin{multline}\label{shift.time2}
  \int_{-\infty}^{\infty} |\umu_\la(t+k, x)-\umu_{\la}(t, x)|dx
  \leq C \l((\la^{q-2}+\mu)k^{1/3}+ k^{2/3}t^{1/q-1}\r)\\
  + 2\omega(t+k, k^{1/3})+2\omega(t, k^{1/3}), \quad\quad  \forall 0<k<1, \ \forall t>0.
  \end{multline}
 \end{lemma}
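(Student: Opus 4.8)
The plan is to upgrade the spatial $L^1$-modulus of continuity $\omega_\lambda$ of Lemma~\ref{comp.changing.sign}, which degenerates as $\lambda\to\infty$, to one that is uniform in $\lambda$ and $\mu$; the extra structure that makes this possible is the nonnegativity of $\varphi$, which yields the one-sided gradient bound \eqref{estt.6}. First I would record that, since $\umu_\la$ solves a system of the form \eqref{umu.1} with $\alpha=\lambda^q$ and kernel $J_\lambda$ of mass one, since $\|\varphi_\lambda\|_{L^1(\rr)}=\|\varphi\|_{L^1(\rr)}$, and since the constants in Lemma~\ref{linfty} depend neither on $\alpha$ nor on $J$, estimate \eqref{estt.6} transfers to
\[
(\umu_\la)_x(t,x)\leq C(\|\varphi\|_{L^1(\rr)},q)\,t^{-2/q},\qquad t>0,\ \text{a.e. }x\in\rr,
\]
uniformly in $0<\mu\leq1\leq\lambda$; I would also use the mass conservation \eqref{conservation.property} and the tail estimate \eqref{int.2r} of Lemma~\ref{tails.control}, both uniform in these parameters.

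For \eqref{shift.space2}, fix $t>0$ and $0<|h|<1$ and set $g:=\umu_\la(t,\cdot+h)-\umu_\la(t,\cdot)$. By mass conservation and translation invariance $\int_\rr g=0$, so $\int_\rr|g|=2\int_\rr g^{+}=2\int_\rr g^{-}$; the gradient bound forces whichever of $g^{+},g^{-}$ is dictated by $\sgn h$ to be pointwise $\leq C(\|\varphi\|_{L^1(\rr)},q)\,t^{-2/q}|h|$. Splitting $\rr$ into $\{|x|\leq 2R\}$ and $\{|x|>2R\}$ for $R>1$, the pointwise bound handles the bounded window with a contribution of order $R\,t^{-2/q}|h|$, while on the complement I bound $\int_{|x|>2R}|\umu_\la(t,x\pm|h|)|\,dx\leq\int_{|x|>R}|\umu_\la(t,x)|\,dx$ (using $|h|<1<R$) and invoke \eqref{int.2r} with $R/2$. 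Writing $\tau(\rho):=\int_{|x|>\rho}|\varphi(x)|\,dx\to0$ as $\rho\to\infty$, this gives
\[
\int_\rr|\umu_\la(t,x+h)-\umu_\la(t,x)|\,dx\leq C\,R\,t^{-2/q}|h|+8\,\tau(R/2)+C\Big(\frac{t}{R^{2}}+\frac{t^{1/q}}{R}\Big),
\]
uniformly in $\mu,\lambda$. Choosing $R=|h|^{-1/2}$ turns the right-hand side into an explicit $\omega(t,|h|)$ that is nondecreasing in its second argument (since $\tau$ is nonincreasing), independent of $\lambda$ and $\mu$, and tends to $0$ as $|h|\to0$ for each fixed $t$; if genuine smoothness is wanted, replace $\tau$ by a smooth nonincreasing majorant vanishing at infinity. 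This proves \eqref{shift.space2}.

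For \eqref{shift.time2}, I would repeat the computation behind \eqref{equ1} in Lemma~\ref{comp.changing.sign}: with $g:=\umu_\la(t+k,\cdot)-\umu_\la(t,\cdot)$ and the mollified-sign test function $\phi(x)=\int_\rr k^{-1/3}\rho\big((x-\xi)/k^{1/3}\big)\sgn g(\xi)\,d\xi$ (so that \eqref{bounds.deriv} holds), multiplying \eqref{eq.u.la} by $\phi$, integrating over $(t,t+k)\times\rr$ and estimating the three resulting terms exactly as in \eqref{first.term}, \eqref{ineq2}, \eqref{ineq2.1} — via \eqref{bound.second.order}, mass conservation, and the $L^{1}$--$L^{\infty}$ decay of Lemma~\ref{est.sol.negativas}, all uniform in $0<\mu\leq1\leq\lambda$ — gives $\int_\rr\phi g\leq C(q,J,\|\varphi\|_{L^1(\rr)})\big((\lambda^{q-2}+\mu)k^{1/3}+k^{2/3}t^{1/q-1}\big)$. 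To replace $\phi$ by $\sgn g$ I use $\int_\rr|g|=\int_\rr\phi g+\int_\rr g(\sgn g-\phi)$ together with the elementary inequality $|g(x)|\,|\sgn g(x)-\sgn g(\xi)|\leq 2|g(x)-g(\xi)|$ and Fubini; since the mollifier confines $|x-\xi|\leq k^{1/3}<1$, the error is bounded by a mollified average of $|\umu_\la(t+k,\cdot+\eta)-\umu_\la(t+k,\cdot)|+|\umu_\la(t,\cdot+\eta)-\umu_\la(t,\cdot)|$ over $|\eta|\leq k^{1/3}$, which by \eqref{shift.space2} (applied at $t$ and at $t+k$, and monotonicity of $\omega$) is at most $2\omega(t+k,k^{1/3})+2\omega(t,k^{1/3})$. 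Collecting these bounds yields \eqref{shift.time2}.

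The only genuinely delicate step is the construction in \eqref{shift.space2}: one must be sure that the gradient bound \eqref{estt.6} and the tail bound \eqref{int.2r} are honestly $\lambda$- and $\mu$-uniform — this is exactly where the $\alpha$- and $J$-independence of the constants in Lemma~\ref{linfty} and the hypothesis $\lambda\geq1$ of Lemma~\ref{tails.control} come in — and that, after optimizing over $R$, the resulting modulus still vanishes as $h\to0$ for each fixed $t$ in spite of the $t^{-2/q}$ blow-up near $t=0$ (it does, since $R\,t^{-2/q}|h|=t^{-2/q}|h|^{1/2}\to0$ for fixed $t$). Once \eqref{shift.space2} is available, \eqref{shift.time2} follows by the routine flux-free computation of \cite[Lemma~2.3]{1052.35126}, now performed with $\lambda$-uniform inputs.
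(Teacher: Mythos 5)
Your proof is correct and follows essentially the same route as the paper: the uniform one-sided Oleinik bound \eqref{estt.6} controls the positive part of the spatial increment, the tail estimate of Lemma \ref{tails.control} handles $|x|>2R$, the choice $R=|h|^{-1/2}$ yields the $\lambda$- and $\mu$-independent modulus $\omega(t,r)$, and the time modulus is the mollified-sign argument of \cite[Lemma 2.3]{1052.35126} with the error absorbed by \eqref{shift.space2} at times $t$ and $t+k$. The only (harmless) difference is bookkeeping: you use $\int_\rr|g|=2\int_\rr g^{+}$ globally, whereas the paper works on $(-R,R)$ via the set $\mathcal{P}$ and bounds the resulting boundary terms by the $L^\infty$ decay.
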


\begin{remark}Function $\omega$ above can be written explicitly as
\[
    \omega_{\varphi,q}(t,r)=2 \int _{|x|>r^{-1/2}}\varphi(x) dx+ C(\|\varphi\|_{L^1(\rr)},q, J)r^{1/2} (t^{-2/q}+t^{-1/q}+t+t^{1/q}).
\]
In contrast with the results in  \cite[Lemmas 2.3  and 3.12]{1052.35126}, here the constant $C$ in \eqref{shift.time2} does not depend on the $L^\infty(\rr)$ norm of the initial data $\varphi_\la$. Moreover the estimates are uniform with respect to the parameters $0< \mu \leq 1\leq \la$.
\end{remark}

\begin{proof}
In the following the constant $C$ may change from line to line. First, we claim that there exists a positive constant $C=C(\|\varphi\|_{L^1(\rr)}, q)$ such that, for any $R>0$ and $h\in (0, 1)$ it holds
\begin{equation}\label{claim}
\int_{-R}^{R} |\umu_\la(t, x+h)-\umu_\la(t, x)|dx \leq C \l( \frac{Rh}{t^{2/q}}+\frac{h}{t^{1/q}}\r).
\end{equation}
Indeed, proceeding as in \cite[Corollary 4]{1070.35065} we define the set
\[
  \mathcal{P}:=\{x\in (-R, R), \  \umu_\la(t, x+h)\geq \umu_\la(t, x)\}
\]
 and we obtain
\begin{align*}
\int_{-R}^{R}& |\umu_\la(t, x+h)-\umu_\la(t, x)|dx \\
&\leq 2 \int_{\mathcal{P}} (\umu_\la(t, x+h)-\umu_\la(t, x))+\int_{-R}^{-R+h} \umu_\la(t, x) dx - \int_{R}^{R+h} \umu_\la(t, x) dx.
\end{align*}
Using  the mean value theorem and estimates \eqref{estt.6} and \eqref{est.infty} for the nonnegative solution $\umu_\la$
we successively obtain
\begin{align*}
\int_{-R}^{R} |\umu_\la(t, x+h)-\umu_\la(t, x)|dx&\leq 2 C(\|\varphi\|_{L^1(\rr)}, q) \int_{\mathcal{P}} \frac {h}{t^{2/q}} dx + 2h \|\umu_\la(t)\|_{L^\infty(\rr)}\\
& \leq C(\|\varphi\|_{L^1(\rr)}, q) \l( \frac{Rh}{t^{2/q}} +  \frac{h}{t^{1/q}}\r).
\end{align*}

Next, we set $I$ to be the left hand side term in \eqref{shift.space2} and split it as  $I=I_1+I_2$
where
\[
 I_2= \int_{|x|\geq 3 R} |\umu_{\la}(t, x+h)-\umu_{\la}( t, x)|dx,
\]
 with a large constant $R$ which will be precise later. First, the term $I_1$ can be estimated  using \eqref{claim}. In the case of $I_2$, when $R>h$ we have
\[
  I_2\leq 2 \int _{|x|\geq 3R-h} u_\lambda^\mu(t,x)dx\leq 2 \int _{|x|>2R}u_\lambda^\mu(t,x)dx.
\]
Using the tail control obtained in Lemma \ref{tails.control} we get
\[
  I_2\leq 2 \int _{|x|>R}\varphi(x) dx + C(J,\|\varphi\|_{L^1(\rr)},q) \left(\frac t{R^2}+\frac{t^{1/q}}R\right).
\]
Hence our left hand side term in \eqref{shift.space2} satisfies
\[
  I\leq 2 \int _{|x|>R}\varphi(x) dx + C \left(\frac{Rh}{t^{2/q}} +  \frac{h}{t^{1/q}}+\frac t{R^2}+\frac{t^{1/q}}R\right).
\]
Choosing now $R=h^{-1/2}$ and using that $h<1$ we get
\[
  I\leq 2 \int _{|x|>h^{-1/2}}\varphi(x) dx+ Ch^{1/2} (t^{-2/q}+t^{-1/q}+t+t^{1/q}).
\]
Denoting
\[
  \omega(t,r)=2 \int _{|x|>r^{-1/2}}\varphi(x) dx+ Cr^{1/2} (t^{-2/q}+t^{-1/q}+t+t^{1/q})
\]
we obtain the desired estimate.

  For the last statement of lemma, we can proceed as in \cite[Lemma 2.3]{1052.35126}.
  \end{proof}
  Now we have all the ingredients to prove Theorem \ref{global.convergence}.

\begin{proof}[Proof of Theorem \ref{global.convergence}]
Let us assume without loosing generality that $\alpha=1$ in system \eqref{claws}.
We consider the regularized system \eqref{eq.u.la} with $\lambda=1$ and denote its solution by $u^\mu$.
In view of Lemmas \ref{tails.control} and \ref{comp.changing.sign}  we obtain that $(u^\mu)_{\mu}$ is relatively compact in $C([0,T],L^1(\rr))$ for any $0<T<\infty$. As a consequence there is a function $u \in C([0,\infty),L^1(\rr))$ such that, up to a subsequence,
$u^\mu \rightarrow u$ in $C([0,T],L^1(\rr))$. Moreover
\[
  \int _{\rr} u(t,x)dx=\int _{\rr}\varphi(x)dx.
\]

Following the ideas in \cite{1052.35126} we obtain that $u\in C([0,\infty),L^1(\rr))\cap L^\infty_{loc}((0,T),L^\infty(\rr))$ is the entropy solution of problem \eqref{claws.2}.

 Now for any positive time $t$ we have that $\umu(t)\rightarrow u(t)$ a.e. and so the $L^\infty(\rr)$-bound \eqref{est.neg.1} on $\umu$ will be transferred to $u(t)$:
	\begin{equation}\label{mod}
  0\leq |u(t,x)|\leq \Big(\frac {qM}{(q-1)t}\Big)^{1/q}.
\end{equation}
This implies that the convergence of $\umu(t)$ towards $u(t)$ will also hold in any $L^p(\rr)$ with $1\leq p<\infty$. Since the uniqueness of the entropy solution in the norm $L^\infty([0,\infty),L^1(\rr))\cap L^\infty_{loc}((0,\infty),L^\infty(\rr))$  was proved in \cite[Theorem 2, p. 497]{MR2103702} the whole sequence $\umu$ converges to $u$.
\end{proof}

\begin{proof}[Proof of Theorem \ref{oleinik_theorem}]
The two estimates \eqref{ineq-oleinik}-\eqref{est.u.5} are consequence of the fact that $\umu$ solution of the regularized system \eqref{eq.u.la} (with $\lambda=1$) strongly converges to $u$ in $L^p(\rr)$ for $1\leq p<\infty$. When $p=\infty$ we use the a.e. convergence to obtain the desired result.
\end{proof}

\section{Asymptotic behaviour of nonnegative solutions}\label{asybehavior}

In this section we prove Theorem \ref{asimp} in the case of nonnegative solutions.
We first obtain the compactness of the trajectories $\{u_\la\}_{\la\geq 1}$ and prove their convergence to an entropy solution of problem \eqref{gas}. Finally, the results of Theorem \ref{asimp} are obtained by showing that $u_\la(1)\to w_M(1)$, where $w_M$ is the unique entropy solution of system \eqref{gas}.
We proceed in several steps as follows.
\vspace{0.2cm}

\noindent{\bf Step I. Compactness.} As in the proof of Theorem \ref{global.convergence} for any fixed $\la>0$ the family $(u_\la^\mu)_{\mu}$ of the regularized system \eqref{eq.u.la} is relatively compact in $C([0, \infty), L^1(\rr))$ and there exists the limit function $u_\la$ such that
$$u_\la^\mu \to u_\la, \textrm{ in } C([0, \infty), L^1(\rr)), \quad \textrm{ as } \mu\to 0.$$
  Then for any positive time $t>0$,   since $\umu_\lambda (t) \rightarrow u_\la(t)$ in $L^1(\rr)$ as $\mu\rightarrow 0$, we first observe that  we can pass to the limit in estimates \eqref{shift.space2} and \eqref{shift.time2}. Let us fix $0<t_1<t_2<\infty$. We  obtain that  for some function $\omega$, independent on $\la\geq 1$,  such that $\omega(r)\rightarrow 0$ as $r\rightarrow 0$
the following estimates  holds uniformly for any $\la\geq 1$:
\begin{equation}
\label{est.comp.1}
  \max _{t\in [t_1,t_2]} \int_{\rr} |u_\la (t,x+h)-u_\la (t,x)|dx\leq C(t_1,t_2)\omega(h)
\end{equation}
and
\begin{equation}
\label{est.comp.2}
\max _{t\in [t_1,t_2]} \int_{\rr} |u_\lambda(t+k,x)-u_\lambda (t,x)|dx\leq C(t_1,t_2)\omega (h).
\end{equation}
Moreover we have mass conservation and the $L^\infty(\rr)$-bound on $u_\la$:
\begin{equation}
\label{mass.conservation}
  \int _{\rr}u_\lambda(t,x)dx=M, \quad \|u_\lambda(t)\|_{L^\infty(\rr)}\leq Ct^{-1/q}, \quad \forall t>0.
\end{equation}
The tail control obtained in Lemma \ref{tails.control} also transfers from $\{\umu_{\la}\}$ to the family $\{u_\lambda\}$:
\begin{equation}
\label{tail.u.lambda}
  \int _{|x|>2R} u_\lambda(t,x)dx\leq \int _{\rr}{\varphi}(x)dx + C\left(\frac t{R^2}+\frac{t^{1/q}}R\right).
\end{equation}
Classical compactness arguments give us that the family $\{u_\lambda\}_{\lambda>1}$ is relatively compact in $C([t_1,t_2],L^1(\rr))$ for any $0<t_1<t_2$. Then there exists a function $\overline{u}\in C((0,\infty),L^1(\rr))$ such that, up to a subsequence, $u_\lambda \rightarrow \overline{u}$ in
 $C([t_1,t_2],L^1(\rr))$ for any $0<t_1<t_2$. The limit point $\overline{u}$ also satisfies
 \begin{equation}
\label{prop.u}
  \int_\rr \overline{u}(t,x)dx=M, \quad \|\overline{u}(t)\|_{L^\infty(\rr)}\leq Ct^{-1/q}.
\end{equation}
Moreover, the above convergence also holds in $C([t_1,t_2],L^p(\rr))$ for any $0<t_1<t_2$ and $1\leq p<\infty$.

\vspace{0.2cm}

\noindent{\bf Step II. Identification of the limit.}\label{identlimit} Due to the compactness results above, we can show that $\overline{u}$ is the entropy solution of problem
  \begin{equation}\label{limitt}
\left\{
\begin{array}{ll}
w_t  + ({|w|^{q-1}w}/q)_x= 0,& x\in \rr, t>0,\\[10pt]
w(x,0)=M\delta_0, &x\in \rr.
\end{array}
\right.
\end{equation}
We now check that the cluster limit point $\overline{u}$ of $u_\la$ satisfies the Kru\v{z}kov inequality C3) in Definition \ref{entropysol}. Since $u_\la$ is an entropy solution of system \eqref{claws.2} it satisfies
\begin{multline*}
\int_0^\infty \int_{\rr} \left(|u_\la-k|\frac{\partial \phi}{\partial t} +\sgn(u_\la-k)(f(u_\la)-f(k))\frac{\partial \phi}{\partial x} \right) dxdt\\
\geq  \la^q\int_0^\infty \int _{\rr}\left(|u_\la-k|-\sgn(u_\la-k)J_\la\ast (u_\la-k)\right)\phi dxdt,
\end{multline*}
for any $\phi \in C_c^\infty((0, \infty) \times \rr)$, $\phi\geq 0$.
 Following the ideas of \cite[Lemma 4.3]{MR2914243} we will prove that letting  $\lambda
\rightarrow \infty$ the term on the right hand side vanishes. Observe that
\begin{multline*}
(|u_\la(t, \cdot)-k|-\sgn(u_\la(t, \cdot)-k)J_\la\ast (u_\la(t, \cdot)-k))(x)\\
=
\int _{\rr}J_{\la}(x-y)\left(|u_\la(t, x)-k| - \sgn (u_\la(t, x)-k)(u_\la(t, y)-k)\right) dy\\
\geq
\int _{\rr}J_{\la}(x-y)\left(|u_\la(t, x)-k| -|u_\la(t, y)-k|\right) dy.
\end{multline*}
Employing a change of variables for $\phi\geq 0$ we get
\begin{multline*}
 \int _{\rr}\left(|u_\la(t, x)-k|-\sgn(u_\la(t, x)-k)J_\la\ast (u_\la(t, \cdot)-k)(x)\right) \phi(t, x) dx\\
\geq \int _{\rr}\int _{\rr} |u_\la(t, x)-k|  J_{\la}(x-y)(\phi(t,  x)-\phi(t, y))dydx\\
=\int _{\rr} |u_\la(t, x)-k|  (\phi(t, x)-J_\la\ast \phi(t, x))dx.
\end{multline*}

Let us assume that $\phi$ is supported in $[0,T]\times \rr$. Then we have
\begin{align*}
 \la ^q
\int _0^T \int _{\rr}& |u_\la(t, x)-k|  |\phi(t,x)-J_\la\ast \phi(t,x)|dxdt\\
&\leq
\la^{q-2} \|\la^2(J_\lambda\ast \phi -\phi)\|_{L^\infty((0,T)\times \rr)} \int _0^T\int_{\rr} |u_\la(t, x)|  dxdt\\
&\quad+ \lambda^{q-2}|k| \int _0^T  \|\la^2(J_\lambda\ast \phi -\phi)\|_{L^1(\rr)}dt.
\end{align*}
  Using estimate \eqref{bound.second.order} and since  $u_\la$ is uniformly bounded in $L^1(\rr)$
we get
\begin{align*}
 \la ^q
\int _0^T \int _{\rr}& |u_\la(t, x)-k|  |\phi(t,x)-J_\la\ast \phi(t,x)|dxdt\\
&\leq
\la^{q-2} T C(J)\|\phi_{xx}\|_{L^\infty((0,T)\times \rr)}  \|\varphi\|_{L^1(\rr)} + \lambda^{q-2}|k| T C(J) \|\phi_{xx}\|_{L^\infty((0,T),L^1(\rr))} .
\end{align*}
Using that
 $q<2$ we obtain that the right hand side term goes to zero as $\la\rightarrow \infty$.

Using now that $u_\la$ strongly converges to $\overline{u}$ in $L^1(\rr)$ we can pass to the limit also in the left hand side of the Kru\v{z}kov inequality. This gives us
that the cluster point $\overline{u}$ satisfies the inequality in condition C3):
\[
\int_0^\infty \int _{\rr} \left(|\overline{u}-k|\frac{\partial \phi}{\partial t} +\sgn(\overline{u}-k)(f(\overline{u})-f(k))\frac{\partial \phi}{\partial x} \right) dxdt\geq 0.
\]

Analogously, we can obtain the passage to the  limit in the weak formulation of equation \eqref{claws}.

\vspace{0.2cm}

\noindent{\bf Step III. Initial data.}\label{intialdata}
We have to prove that $\overline{u}$ takes initial data $M\delta _0$ in the sense of measures. We use the same arguments as in case of the tails control.
We recall that we have obtained in the proof of Lemma \ref{tails.control} that for any  $\psi\in C^2(\rr)$ there exists a uniform constant $C>0$ such that the following holds for any $\lambda\geq 1$:
\[
\Big|\int _{\rr}u_\la(t,x) \psi(x)dx-\int _{\rr}\varphi_\la(x)\psi(x)dx \Big|
\leq C \left( t \|\psi_{xx}\|_{L^\infty(\rr)}  +  \|\psi_{x}\|_{L^\infty(\rr)} t^{1/q}\right).
\]
Letting $\lambda \rightarrow\infty$ we get
for any $\psi\in C^2(\rr)$
\[
\Big|\int _{\rr}\overline{u}(t,x) \psi(x)dx-M\psi(0) \Big|
\leq C \left( t \|\psi_{xx}\|_{L^\infty(\rr)}  +  \|\psi_{x}\|_{L^\infty(\rr)} t^{1/q}\right).
\]
By a density argument we obtain that for any $\psi\in BC(\rr)$ we have
\[
\lim _{t\rightarrow 0} \int _{\rr}\overline{u}(t,x) \psi(x)dx=M\psi(0).
\]
This shows that $\overline{u}$ is an entropy solution of problem \eqref{limitt}. Since by \cite{MR735207} we have a unique entropy solution,  the whole sequence $\{u_\la\}_{\la}$ converges toward $\overline{u}=w_M$ given by \eqref{limitt}.

In the case of bounded continuous functions $\psi$ we have to use an approximation argument and the control of the tails given by Lemma \ref{tails.control}.

\vspace{0.2cm}

\noindent{\bf Step IV. Asymptotic behavior.} Roughly speaking, the asymptotic behavior is equivalent with the strong convergence in $L^1$ of the family $\{u_\la\}$ to $w_M$.
From Steps I-III we deduce that the  nonnegative solutions satisfy
\begin{equation}\label{mod2}
\|u_\lambda (1)-w_M(1)\|_{L^1(\mathbb{R})}\rightarrow 0,\quad \text{as} \quad\lambda \rightarrow \infty.
\end{equation}

This proves the result of Theorem \ref{asimp} when $p=1$.

According to \cite{MR1233647} it is well-known that solutions of the pure convective equation \eqref{limitt} in the exponent range $1< q <2$, with Dirac initial data, behave asymptotically for large time as the solution of the heat equation with the corresponding convective term, namely,
\begin{equation}\label{decay.profile}
\|w_M(t)\|_{L^\infty(\mathbb{R})}\lesssim t^{-\frac 1q},
\end{equation}
which is the same decay as for the $\overline{u}$ in \eqref{prop.u}. If $u$ is the entropy solution of system \eqref{Ldifusion}  then, using the interpolation inequality it follows that
\begin{align*}
\|u(t)-w_M(t)\|_{L^p(\mathbb{R})}&\lesssim \|u(t)-w_M(t)\|_{L^1(\mathbb{R})}^{1/p} (\|u(t)\|_{L^\infty(\rr)}^{(p-1)/p}+\|w_M(t)\|_{L^\infty(\rr)}^{(p-1)/p}),
\end{align*}
and we obtain the desired estimate according to \eqref{mod}, \eqref{mod2} and \eqref{decay.profile}.
The proof of the main result is now finished.

\section{Changing sign solutions
}\label{signchange}

In this section we prove Theorem \ref{asimp} in the case of changing sign solutions. In this case Oleinik-type estimates cannot be obtained  and the tools employed in the previous section do not work here. We apply a technique based on the classical flux-entropy method,  but taking care on the behavior of the nonlocal terms.
%
%
%
 Let us consider $\umu_\lambda$ solution of the regularized system
\begin{equation}\label{reg.system}
\left\{
\begin{array}{ll}
\umu_{\la,t}+ |\umu_\la|^{q-1}(\umu_\la)_x=\la^{q} (J_\la\ast \umu_\la -\umu_\la) +\mu (\umu_\la)_{xx},& x\in \rr,\ t>0,\\[10pt]
\umu_\la(0)=\varphi_\la.
\end{array}
\right.
\end{equation}

We consider a convex function  $\Phi\in C^2(\rr)$ and consider $\Psi$ such that $\Psi '(y)=\Phi'(y)f'(y)$ for all $y\in \rr$. This is always possible since $f(u)=|u|^{q-1}u$, $q>1$, is $C^1(\rr)$. Multiplying \eqref{reg.system} by $\Phi'(\umu_\lambda)$ we obtain
\[
  [\Phi(\umu_\lambda)]_t+[\Psi (\umu_\lambda)]_x=\lambda^q (J_\lambda\ast \umu_\lambda -\umu_\lambda)\Phi'(\umu_\lambda)
  +\mu [(\Phi(\umu_\lambda))_{xx}-\Phi''(\umu_\lambda)(\umu_{\lambda,x})^2].
\]
Since $\umu_\lambda \rightarrow u_\lambda$ in $C([0,T], L^1(\rr))$ and $(\umu_\lambda)_{\mu>0}$ is uniformly bounded on any interval $I\subset (0,\infty)$ we obtain that
\begin{equation}
\label{flux.lambda}
   [\Phi(u_\lambda)]_t+[\Psi (u_\lambda)]_x=\lambda^q (J_\lambda\ast u_\lambda -u_\lambda)\Phi'(u_\lambda)\quad \text{in}\quad \mathcal{D}'((0,\infty)\times \rr).
\end{equation}

We are going to prove that, for any $0<t_1<t_2<\infty$, the right hand side in \eqref{flux.lambda} is the sum between a compact set in
$H^{-1}_{loc}(((t_1,t_2)\times \rr)$ and a bounded set in the space of Radon Measures $\mathcal{M}((t_1,t_2)\times \rr)$. Before proving that let us recall that inequality \eqref{gradient.est} transfer to $u_\lambda$: for any $0<t_1<t_2<\infty$ the following inequality holds uniformly in $\lambda$:
\begin{equation}\label{gradient.est.u.lambda}
  \lambda^q\int_{t_1}^{t_2}\int_\rr \int _\rr J_\lambda(x-y)(u_\lambda(t,x)-u_\lambda(t,y))^2dxdydt\leq C(t_1,\|\varphi\|_{L^1(\rr)}).
\end{equation}
Also inequality \eqref{est.neg.1} transfers to $u_\lambda$:
\begin{equation}
\label{infty.u.lambda}
  \|u_\lambda\|_{L^\infty((t_1,t_2)\times \rr)}\leq C(t_1, \|\varphi\|_{L^1(\rr)}).
\end{equation}

Let us denote by $I_\lambda=\lambda^q (J_\lambda\ast u_\lambda -u_\lambda)\Phi'(u_\lambda)$ and by $<\cdot,\cdot>$ the scalar product on $L^2((t_1,t_2)\times\rr)$. Using a changes of variables we get
\begin{align*}
\label{}
  <I_\lambda,\phi>&=\lambda^q \int_{t_1}^{t_2}\int_\rr (J_\lambda\ast u_\lambda -u_\lambda)\Phi'(u_\lambda)\phi  dxdt\\
  &=
  -\frac{\lambda^q}2  \int_{t_1}^{t_2}\int_\rr\int_\rr J_\lambda(x-y)(u_\lambda(t,x)-u_\lambda(t,y))[  (\Phi'(u_\lambda)\phi)(t,x)-
  (\Phi'(u_\lambda)\phi)(t,y)] dx dy dt\\
  &=  <I_\lambda^1,\phi>+  <I_\lambda^2,\phi>,
\end{align*}
 where
\[
   <I_\lambda^1,\phi>= -\frac{\lambda^q}2  \int_{t_1}^{t_2}\int_\rr\int_\rr J_\lambda(x-y)(u_\lambda(t,x)-u_\lambda(t,y))[  (\Phi'(u_\lambda))(x)-
  (\Phi'(u_\lambda))(y)]\phi(t,x) dx dy dt
\]
and
\[
  <I_\lambda^2,\phi>= -\frac{\lambda^q}2  \int_{t_1}^{t_2}\int_\rr\int_\rr J_\lambda(x-y)(u_\lambda(t,x)-u_\lambda(t,y)) (\phi(t,x)-\phi(t,y))    \Phi'(u_\lambda)(t,y) dx dy dt.
\]
Since $\Phi$ is $C^2$ and $|u_\lambda|$ is uniformly bounded, say by $M$,  we find that $\Phi''$ is also uniformly bounded on the interval
$[- M,M  ]$
and, due to \eqref{gradient.est.u.lambda},  the following holds
\begin{align}
\label{est.i.1}
  |  <I_\lambda^1,\phi>| &\lesssim  \lambda^q \|\Phi''\|_{L^\infty(-M,M)} \|\phi\|_{L^\infty((t_1,t_2)\times \rr)}  \nonumber\\
   &\quad  \times \int_{t_1}^{t_2}\int_\rr\int_\rr J_\lambda(x-y)(u_\lambda(t,x)-u_\lambda(t,y))^2 dx dy dt \\
\nonumber &\lesssim  C(t_1, \|\varphi\|_{L^1(\rr)}) \|\Phi''\|_{L^\infty(-M,M)} \|\phi\|_{L^\infty((t_1,t_2)\times \rr)}.
\end{align}
This shows that $I_\lambda^1$ is bounded in the space of Radon measures $\mathcal{M}((t_1,t_2)\times \rr))$.

We now evaluate $I_\lambda^2$. Let us recall that for any  $J\in L^1(1+|x|^2)$ the following inequality \cite[Lemma 2.3]{Ignat:2013fk}) holds
\begin{equation}
\label{ineqsa}
    \lambda^2\int _{\rr}\int _{\rr} J_\lambda(x-y)(\phi(x)-\phi(y))^2dxdy\leq C(J)\int _{\rr} u_x^2(x)dx.
\end{equation}
 Applying H\"{o}lder inequality, inequality \eqref{ineqsa} and \eqref{gradient.est.u.lambda} we get
\begin{align}
\label{est.i.2}
    |  <I_\lambda^2,\phi>|^2 &\lesssim \lambda^{q-2} \|\Phi'\|^2_{L^\infty(-M,M)}  \lambda^q \int_{t_1}^{t_2}\int_\rr\int_\rr J_\lambda(x-y)(u_\lambda(t,x)-u_\lambda(t,y))^2  dx dy dt \\
\nonumber    &\quad \times \lambda^2
     \int_{t_1}^{t_2}\int_\rr\int_\rr J_\lambda(x-y)  (\phi(t,x)-\phi(t,y))^2 dx dydt \\
 \nonumber    &\lesssim  \lambda^{q-2} C(t_1,\|\varphi\|_{L^1(\rr)})\|\Phi'\|^2_{L^\infty(-M,M)}   \|\phi\|_{L^2((t_1,t_2),H^1(\rr))}^2 .
\end{align}
Thus
\[
  \|I_\lambda^2\|_{L^2((t_1,t_2),H^{-1}(\rr))}\lesssim \lambda^{\frac{q-2}2}.
\]
This implies in particular that $I_\lambda^2\rightarrow 0$ in $H^{-1}((t_1,t_2)\times \rr)$ so $I_\lambda^2$ is relatively compact
in $H^{-1}((t_1,t_2)\times \rr)$.

We now apply the arguments in \cite{MR584398}. We recall that $(u_\lambda)_{\lambda>0}$ is uniformly bounded in $(t_1,t_2)\times \rr$. Thus  we have that, up to a subsequence,
	\[
  u_\lambda \mathop{\rightharpoonup }^{*} \overline{u}\quad \text{in}\quad L^\infty((t_1,t_2)\times \rr).
\]
Using  that the right-hand side of \eqref{flux.lambda} is the sum between a relatively compact set in $H^{-1}_{loc}((t_1,t_2)\times \rr)$ and a bounded set in $\mathcal{M}((t_1,t_2)\times \rr))$
 by classical results of Tartar (\cite[Th. 26, p. 202]{MR584398}) we obtain that $f'(u_\lambda)\rightarrow f'(\overline{u})$ in $L^p_{loc}((t_1,t_2)\times\rr)$ for any $1\leq p<\infty$. Since the function $f$ is nowhere affine we obtain that
$u_\lambda \rightarrow \overline{u}$ in $L^p_{loc}((t_1,t_2)\times\rr)$ for any $1\leq p<\infty$. A diagonal process guarantees that
in fact
\[
  u_\lambda \rightarrow \overline{u} \quad \text{in}\quad L^p_{loc}((0,\infty)\times\rr), \, 1\leq p<\infty.
\]

Proceeding as in the Step II of  the case of nonnegative solutions (see Section \ref{asybehavior}) we  find that $\overline{u}$ is an entropy solution of problem \eqref{limitt}. Moreover the tail control obtained in Lemma \ref{tails.control} guarantees that
\[
  \int _{|x|>2R}|u_\lambda(t,x)|dx\leq \int _{\rr}|\varphi(x)|dx+C\left(\frac{t}{R^2}+\frac{t^{1/q}}R\right).
\]
This shows that for every $0<\tau<T<\infty$
\[
  u_\lambda \rightarrow \overline{u} \quad \text{in} \quad L^1((\tau,T)\times\rr).
\]

In order to prove that $\overline{u}$ take the initial data $M\delta_0$ in the sense of bounded measures we can proceed as in  \cite[ Th. 1, Step III, p. 56]{MR1233647}. Since equation \eqref{limitt} has a unique solution we obtain that $\overline{u}=w_M$ and the whole sequence $u_\lambda$ converges to $w_M$
.

Using the same arguments as in \cite[ Th. 1, Step IV, p. 57]{MR1233647} we obtain that
the entropy solution $u$ of system \eqref{Ldifusion} satisfies
\[
\lim _{t\rightarrow \infty} t^{\frac  1q\left(1-\frac 1p\right)}\|u(t)-w_M(t)\|_{L^p(\rr)}=0, \quad 1\leq p < \infty,
\]
which finishes the proof of the main result of this paper in the case of changing sign solutions.

\textbf{Acknowledgments.} C. C. and L. I. were partially supported by  a grant of the Romanian National Authority for Scientific Research and Innovation, CNCS--UEFISCDI, project number PN-II-ID-PCE-2011-3-0075.  L. I. was also partially supported by  Grant MTM2014-52347, MICINN, Spain and FA9550-15-1-0027 of AFOSR.
 A. P. was partially supported by CNPq (Brasil). This work started when A. P.  visited the ``Simion Stoilow" Institute of Mathematics of the Romanian Academy in Bucharest to whom the authors are very grateful for its hospitality and good research atmosphere.

\bibliographystyle{plain}
\bibliography{biblioLast}

\begin{thebibliography}{10}

\bibitem{MR2914243}
N.~Alibaud, S.~Cifani, and E.~R. Jakobsen.
\newblock Continuous dependence estimates for nonlinear fractional
  convection-diffusion equations.
\newblock {\em SIAM J. Math. Anal.}, 44(2):603--632, 2012.

\bibitem{1214.45002}
F.~Andreu-Vaillo, Jos\'e~M. Maz\'on, J.~D. Rossi, and J.~J. Toledo-Melero.
\newblock {\em {Nonlocal diffusion problems.}}
\newblock {Mathematical Surveys and Monographs 165. Providence, RI: American
  Mathematical Society (AMS); Madrid: Real Sociedad Matem\'atica Espa\~nola.
  xv, 256~p. \$~82.00 }, 2010.

\bibitem{MR1849690}
P.~Biler, G.~Karch, and W.~A. Woyczy{\'n}ski.
\newblock Critical nonlinearity exponent and self-similar asymptotics for
  {L}\'evy conservation laws.
\newblock {\em Ann. Inst. H. Poincar\'e Anal. Non Lin\'eaire}, 18(5):613--637,
  2001.

\bibitem{1103.46310}
J.~Bourgain, H.~Brezis, and P.~Mironescu.
\newblock {Another look at Sobolev spaces.}
\newblock {Menaldi, Jos\'e Luis (ed.) et al., Optimal control and partial
  differential equations. In honour of Professor Alain Bensoussan's 60th
  birthday. Proceedings of the conference, Paris, France, December 4, 2000.
  Amsterdam: IOS Press; Tokyo: Ohmsha. 439-455 (2001).}, 2001.

\bibitem{MR1440033}
A.~Carpio.
\newblock Large time behaviour in convection-diffusion equations.
\newblock {\em Ann. Scuola Norm. Sup. Pisa Cl. Sci. (4)}, 23(3):551--574, 1996.

\bibitem{MR2888353}
Qiang Du, James~R. Kamm, R.~B. Lehoucq, and Michael~L. Parks.
\newblock A new approach for a nonlocal, nonlinear conservation law.
\newblock {\em SIAM J. Appl. Math.}, 72(1):464--487, 2012.

\bibitem{MR1233647}
M.~Escobedo, J.~L. V{\'a}zquez, and E.~Zuazua.
\newblock Asymptotic behaviour and source-type solutions for a
  diffusion-convection equation.
\newblock {\em Arch. Rational Mech. Anal.}, 124(1):43--65, 1993.

\bibitem{MR1266100}
M.~Escobedo, J.~L. V{\'a}zquez, and E.~Zuazua.
\newblock A diffusion-convection equation in several space dimensions.
\newblock {\em Indiana Univ. Math. J.}, 42(4):1413--1440, 1993.

\bibitem{0762.35011}
M.~Escobedo and E.~Zuazua.
\newblock {Large time behavior for convection-diffusion equations in
  $\mathbf{R}^N$.}
\newblock {\em J. Funct. Anal.}, 100(1):119--161, 1991.

\bibitem{Ignat:2013fk}
L.~I Ignat, T.~I. Ignat, and D.~Stancu-Dumitru.
\newblock {A compactness tool for the analysis of nonlocal evolution
  equations}.
\newblock {\em SIAM J. Math. Anal.}, 47(2):1330--1354, 2015.

\bibitem{MR3190994}
L.~I. Ignat and A.~F. Pazoto.
\newblock Large time behaviour for a nonlocal diffusion---convection equation
  related with gas dynamics.
\newblock {\em Discrete Contin. Dyn. Syst.}, 34(9):3575--3589, 2014.

\bibitem{MR2356418}
L.~I. Ignat and J.~D. Rossi.
\newblock A nonlocal convection-diffusion equation.
\newblock {\em J. Funct. Anal.}, 251(2):399--437, 2007.

\bibitem{MR2962829}
G.~Karch.
\newblock Nonlinear evolution equations with anomalous diffusion.
\newblock In {\em Qualitative properties of solutions to partial differential
  equations}, volume~5 of {\em Jind\u rich Ne\u cas Cent. Math. Model. Lect.
  Notes}, pages 25--68. Matfyzpress, Prague, 2009.

\bibitem{MR2047911}
Y.~J. Kim.
\newblock An {O}leinik-type estimate for a convection-diffusion equation and
  convergence to {$N$}-waves.
\newblock {\em J. Differential Equations}, 199(2):269--289, 2004.

\bibitem{1052.35126}
C.~Lattanzio and P.~Marcati.
\newblock {Global well-posedness and relaxation limits of a model for radiating
  gas.}
\newblock {\em J. Differ. Equations}, 190(2):439--465, 2003.

\bibitem{1070.35065}
P.~Lauren\c{c}ot.
\newblock {Asymptotic self-similarity for a simplified model for radiating
  gases.}
\newblock {\em Asymptotic Anal.}, 42(3-4):251--262, 2005.

\bibitem{MR735207}
T.-P. Liu and M.~Pierre.
\newblock Source-solutions and asymptotic behavior in conservation laws.
\newblock {\em J. Differential Equations}, 51(3):419--441, 1984.

\bibitem{MR2041005}
A.~C. Ponce.
\newblock An estimate in the spirit of {P}oincar\'e's inequality.
\newblock {\em J. Eur. Math. Soc. (JEMS)}, 6(1):1--15, 2004.

\bibitem{0793.76005}
S.~Schochet and E.~Tadmor.
\newblock {The regularized Chapman-Enskog expansion for scalar conservation
  laws.}
\newblock {\em Arch. Ration. Mech. Anal.}, 119(2):95--107, 1992.

\bibitem{MR2103702}
D.~Serre.
\newblock {$L^1$}-stability of nonlinear waves in scalar conservation laws.
\newblock In {\em Evolutionary equations. {V}ol. {I}}, Handb. Differ. Equ.,
  pages 473--553. North-Holland, Amsterdam, 2004.

\bibitem{MR584398}
L.~Tartar.
\newblock Compensated compactness and applications to partial differential
  equations.
\newblock In {\em Nonlinear analysis and mechanics: {H}eriot-{W}att
  {S}ymposium, {V}ol. {IV}}, volume~39 of {\em Res. Notes in Math.}, pages
  136--212. Pitman, Boston, Mass.-London, 1979.

\end{thebibliography}

\end{document}